\numberwithin{equation}{section}
\newtheorem{theorem}{Theorem}[section]
\newtheorem{lemma}[theorem]{Lemma}
\newtheorem{corollary}[theorem]{Corollary}
\theoremstyle{definition}\newtheorem{definition}[theorem]{Definition}
\newtheorem{conjecture}[theorem]{Conjecture}
\theoremstyle{definition}
\theoremstyle{definition}
\theoremstyle{definition}\newtheorem{remark}[theorem]{Remark}
\theoremstyle{definition}\newtheorem*{acknowledgments}{Acknowledgments}
\newcommand{\al}{\alpha}
\newcommand{\ga}{\gamma}
\newcommand{\Ga}{\Gamma}
\newcommand{\del}{\delta}
\newcommand{\Del}{\Delta}
\newcommand{\lam}{\lambda}
\newcommand{\Lam}{\Lambda}
\newcommand{\eps}{\epsilon}
\newcommand{\sig}{\sigma}
\newcommand{\Sig}{\Sigma}
\newcommand{\Om}{\Omega}
\newcommand{\vphi}{\varphi}
\newcommand{\cF}{\mathcal{F}}
\newcommand{\cH}{\mathcal{H}}
\newcommand{\cP}{\mathcal{P}}
\newcommand{\cS}{\mathcal{S}}
\newcommand{\cT}{\mathcal{T}}
\newcommand{\bR}{\mathbb{R}}
\newcommand{\bZ}{\mathbb{Z}}
\newcommand{\bQ}{\mathbb{Q}}
\newcommand{\bN}{\mathbb{N}}
\newcommand{\bT}{\mathbb{T}}
\newcommand{\bu}{\mb{u}}
\newcommand{\bw}{\mb{w}}
\newcommand{\SL}{\operatorname{SL}}
\newcommand{\SO}{\operatorname{SO}}
\newcommand{\GL}{\operatorname{GL}}
\newcommand{\Mat}{\operatorname{Mat}}
\newcommand{\wstar}{\overset{\on{w}^*}{\to}}
\newcommand{\defi}{\overset{\on{def}}{=}}
\newcommand{\lie}{\operatorname{Lie}}
\newcommand\norm[1]{\left\|#1\right\|}
\newcommand\wt[1]{\widetilde{#1}}
\newcommand\wh[1]{\widehat{#1}}
\newcommand\set[1]{\left\{#1\right\}}
\newcommand\pa[1]{\left(#1\right)}
\newcommand\av[1]{|#1|}
\newcommand\bigav[1]{\left|#1\right|}
\newcommand\on[1]{\operatorname{#1}}
\newcommand\diag[1]{\operatorname{diag}\left(#1\right)}
\newcommand\tb[1]{\textbf{#1}}
\newcommand\mat[1]{\pa{\begin{matrix}#1\end{matrix}}}
\newcommand\br[1]{\left[#1\right]}
\newcommand\smallmat[1]{\pa{\begin{smallmatrix}#1\end{smallmatrix}}}
\newcommand\mb[1]{\mathbf{#1}}
\newcommand{\lra}{\longrightarrow}
\newcommand{\onto}{\xymatrix{\ar@{>>}[r]&}}
\newcommand{\da}[4]{\xymatrix{#1 \ar@<.5ex>[r]^{#2} \ar@<-.5ex>[r]_{#3} & #4}}
\newif\ifdraft\drafttrue
\begin{document}
\title[Equidistribution of primitive rational points]{Equidistribution of primitive rational points on expanding horospheres}
\author[M.~Einsiedler]{Manfred Einsiedler}
\author[S.~Mozes]{Shahar Mozes}
\author[N.~Shah]{Nimish Shah}
\author[U.Shapira]{Uri Shapira}
\thanks{M.~E.~acknowledges support of the SNF grant 200021-127145. S.~M.~acknowledges the support of  ISF grant 1003/11, BSF grant 2010295, and the University of Zurich. N.~S.~acknowledges the support of NSF grant 1001654. U.~S.~Chaya Fellow, acknowledges the partial support of the Advanced research Grant 228304 from the European Research Council and ISF
grant 357/13.}

\maketitle

\begin{abstract}
We confirm a conjecture of J.~Marklof regarding the limiting distribution 
of certain sparse collections of points on expanding horospheres. 
These collections are obtained by intersecting the expanded horosphere
with a certain manifold of complementary dimension and turns out to be of arithmetic nature. 
This result is then used along the lines suggested by J.~Marklof to give an analogue of a result of W.~Schmidt regarding the distribution of shapes of lattices orthogonal to integer vectors. 
\end{abstract}

\section{Main results}\label{section one} 
%\comme{Maybe I would prefer the title "Main results" or "Introduction" as there are other results and "Statement" serves no purpose here, but 
%we can also stick with the current title.}

%\comme{I will try to color the places where I made significant changes, but things on the level of a typo I will just fix.}
\subsection{The main theorem}\label{tmnt}
Given integers $m\ge n\ge 1$, let $d\defi n+m$ and consider the space $X_d\defi \SL_d(\bR)/\SL_d(\bZ)$ on which $G\defi\SL_d(\bR)$ and its subgroups act.  
Unless otherwise stated, when we write an element $g\in G$ as a matrix $g=\smallmat{A&B\\ C&D}$, we shall mean that $ A,B,C,D$ represent matrices of dimensions $m\times m, m\times n, n\times m,$ and $n\times n$ respectively. We refer to these as the \textit{block components} of $g$. We shall denote the identity matrix in dimension $k$ by $I_k$  and often write just~$I=I_k$ if the dimension is clear from the context.
%most often, to reduce notational clutter omit the subscript $k$ once the dimension could be deduced
%from the position in which the matrix appears as a block component. 
Similarly, $0$ will denote the zero matrix in various dimensions.
 
Consider the following subgroups of $G$:
\begin{align*}
U&\defi\set{\smallmat{I&0\\ \bu& I}:\bu\in\Mat_{n\times m}(\bR)},\\
V&\defi\set{\smallmat{I&\mb{v}\\0&I}:\mb{v}\in\Mat_{m\times n}(\bR)},\mbox{ and}\\
H&\defi\set{\smallmat{h&\tb{v}\\0& I}: h\in \SL_m(\bR),\tb{v}\in \Mat_{m\times n}(\bR)}.
\end{align*}
Also define the diagonal matrices
\begin{equation*}%\label{ak}
a(y)\defi\smallmat{y^{-\frac{n}{m}}I_m& 0\\ 0 & y I_n}\mbox{ for } y\in\bR_{>0}.
\end{equation*}
We denote by $x_0\in X_d$ the identity coset $\SL_d(\bZ)$ and set for $\mb{u}\in\Mat_{n\times m}(\bR)$
$$ x_\bu\defi \smallmat{I&0\\ \bu& I} x_0.$$
\begin{definition}
Given a subgroup $L<G$ and a point $x\in X_d$ we say that the orbit 
$Lx$ is  \textit{periodic} if it supports an $L$-invariant probability measure.
In this case we denote by $\mb{m}_{Lx}$ the unique $L$-invariant probability measure supported on $Lx$ and refer to it 
as the \textit{Haar} measure on the orbit.
\end{definition}
The orbits $Ux_0$, $Vx_0$ and $Hx_0$ are periodic and their respective Haar measures will play a prominent role in our discussion. In our discussion the cases 
$m>n$ and $m=n$ will exhibit different phenomena and in order to be able to have unified statements we use the 
following notation throughout:
$$\bm{\theta}\defi\Big\{
\begin{array}{ll} 
\mb{m}_{Vx_0} &\textrm{ if $m=n$}\\ 
\mb{m}_{Hx_0} &\textrm{ if $m>n$}.
\end{array}$$

\begin{definition}\label{def primitive}
We say that a matrix $\bu\in\Mat_{n\times m}(\bZ)$ is \textit{$k$-primitive}, where $k$ is a positive integer, 
if the reduction modulo $k$
of its columns span $\pa{\bZ/k\bZ}^n$. 
%\comme{equivalent conditions are elementary, waste space at that crucial early stage}
%Equivalently, the reduction $\bu\on{mod} k$ has a right inverse in $\Mat_{m\times n}(\bZ/k\bZ)$; that is, there exists an integer matrix
%$\mb{v}\in\Mat_{m\times n}(\bZ)$ such that $\bu\cdot\mb{v}\equiv I_{n}\on{mod} k$.
\end{definition}
Consider the finite\footnote{Note that for two integer matrices, if $\mb{u}_1\ = \mb{u}_2 \mod k$ then $x_{k^{-1}\mb{u}_1} =x_{k^{-1}\mb{u}_2}$.
%\comme{the remainder of the footnote should be clear at that stage.}
% and so the cardinality
%of  the set in~\eqref{Pktilde} is bounded by the cardinality of $\on{Mat}_{m\times n}(\bZ/k\bZ)$ and in particular, is finite.
} 
set
\begin{equation}\label{Pktilde}
\widetilde{\cP}_k\defi \set{(x_{k^{-1}\bu},x_{k^{-1}\bu}):\bu\textrm{ is $k$-primitive}}\subset Ux_0\times Ux_0,
\end{equation}
and let $\wt{\bm{\mu}}_k$ denote the normalized counting measure on $\wt{\cP}_k$;  i.e.\
\begin{equation}\label{wtmuk}
\wt{\bm{\mu}}_k\defi\frac{1}{|\wt{\cP}_k|}\sum_{(x,x)\in\wt{\cP}_k}\del_{(x,x)}.
\end{equation}
Finally, let us denote  $\wt{a}(k)=(e,a(k))\in G\times G$. With these notations
we can state our main result.
%\comme{The footnote to the stronger version downgrades the theorem we present, let us not have it. Shahar agrees or suggested this, don't recall.}
%\footnote{We refer the reader to~\S\ref{c.r} were a stronger statement is given (namely Theorem~\ref{strong JED}).} is the following.

\begin{theorem}\label{JED}
As $k\to\infty$,  $\wt{a}(k)_*\wt{\bm{\mu}}_k\wstar\mb{m}_{Ux_0}\times \bm{\theta}$.
\end{theorem}

We remark that our main tool is~\cite{ShahgeneralizedRatner} which extends
the fundamental work of Ratner~\cite{R-annals}. We also note that some of our arguments are similar to~\cite{Mozes-Shah}.

\subsection{An application}\label{ssecapp}
Theorem~\ref{JED} is a generalization of a result by Marklof~\cite[Theorem 6]{Marklof-Inventiones}. 
Marklof's result has various applications, most notably to the 
distribution of Frobenius numbers, circulant graphs and the shapes of co-dimension 1 primitive subgroups of $\bZ^d$ 
(see~\cite{Marklof-Inventiones}, \cite{MarklofStrombergsson-CG}). 
Naturally, in each of these discussions an application 
of Theorem~\ref{JED} gives new results. We give one such application which is an analogue of
a certain equidistribution result of Schmidt~\cite{Schmidt98}. We follow closely the viewpoint of~\cite{Marklof-Inventiones}. 

Assume that 
$n=1, m\ge 2$ so that $d=m+1\ge 3$.
The quotient $Z_m\defi \SO_m(\bR)\backslash X_m$ will be referred  to as the \textit{space of shapes of $m$-dimensional lattices}. We equip $Z_m$ with the probability measure $\mb{m}_{Z_m}$ which is
by definition the image of $\mb{m}_{X_m}$ under the natural projection. 

For any integer vector $v\in\bZ^d$ let us denote by $\Lam_v\defi \bZ^d\cap \set{v}^\perp$; that is $\Lam_v$ is the lattice of integer points in the $m$-dimensional orthocomplement of $v$ in $\bR^d$. We may choose a matrix $k_v\in \SO_d(\bR)$ so that 
$k_v\Lam_v\subset \bR^m\times\set{0}\subset\bR^d$, 
and so that $k_v v$ lies on the positive half of the $d$-th coordinate axis. After normalizing the covolume of $k_v\Lam_v$ to be 1, we
obtain a point in $X_m$. As the choice of $k_v$ is only well defined up to the action of $\SO_m(\bR)$ we obtain a well defined point 
in $Z_m$ which we denote hereafter by $\br{\Lam_v}$.
There is a certain redundancy in considering $\Lam_v$ if $v\in\bZ^d$ is non-primitive (that is if it is an integer multiple of another integer vector). We therefore denote by $\wh{\bZ}^d$ the subset of \textit{primitive} integer vectors. Note that 
a vector $(u_1,\dots,u_m,k)\in\bZ^d$ is primitive if and only if the vector $(u_1,\dots,u_m)\in\bZ^m$ is $k$-primitive 
as in Definition~\ref{def primitive}. Finally, let $B_\infty\defi\set{x\in\bR^d:\norm{x}_\infty\le 1}$ and $\partial B_\infty$ denote its boundary.
As an application of Theorem~\ref{JED} we prove the following result in \S\ref{sec appf}.

\begin{theorem}\label{generalized Schmidt}
Let $\cF\subset\partial B_\infty$  be a measurable set of positive measure having a boundary in $\partial B_\infty$ of measure 0 with respect to the $m$-dimensional Lebesgue  measure on $\partial B_\infty$. For 
any positive integer $k$  let $$\eta_k\defi\frac{1}{\av {\wh{\bZ}^d\cap k \cF}}\sum_{v\in \wh{\bZ}^d\cap k \cF }\del_{\br{\Lam_v}}.$$ Then, $\eta_k\wstar\mb{m}_{Z_m}$ as $k\to\infty$.
\end{theorem}

%In Theorem~\ref{generalized Schmidt} we work with the $\ell_\infty$-norm simply because this relates nicely to Theorem~\ref{JED}. 
Note that the choice of 
the $\ell_2$-norm yields a much more elegant statement as we have the following relation
$$\set{\Lam_v:v\in\wh{\bZ}^d, \norm{v}_2=k}=\set{\displaystyle^{\textrm{Primitive $m$-dimensional subgroups}}_{ \textrm{of $\bZ^d$ of covolume $= k$}} }$$    
We suggest the following.\footnote{Significant progress towards the suggested conjecture was obtained recently in~\cite{AES1}, \cite{AES2}.}
\begin{conjecture}\label{bold conj}
Let $d\geq 3$ and let $B_2(r)$ denote the Euclidean ball of radius $r>0$ in $\bR^d$. Then if $r_n\to\infty$ is a sequence of radii such that $\av{\partial B_2(r_n)\cap \wh{\bZ}^d}\to\infty$, then the collections
$\set{\br{\Lam_v}:v\in \partial B_2(r_n)\cap \wh{\bZ}^d}$ equidistribute in $Z_m$.
\end{conjecture}

\section{Outline and initial steps}\label{seciam}
%\comme{changed title, Shahar agreed or suggested this.}

\subsection{A motivating low dimensional example}\label{i.c}
\quad
We now prove Theorem~\ref{JED} in the case $n=m=1$ to which the techniques of the later parts of this paper do not apply.  
We learned the argument we give here from Jens Marklof and  to the best of our understanding this result was his reason to anticipate the validity of  Theorem~\ref{JED}. 

We identify both of the two orbits $Ux_0=\set{\smallmat{1&0\\s&1}x_0:s\in\bR}$ and $Vx_0=
\set{\smallmat{1&t\\0&1}x_0:t\in\bR}$ with $\bR/\bZ$ and use the parameters $s,t$ to describe them respectively.
We wish to analyze for which values of $y$ 
\begin{equation}\label{eqintne}
a(y)Ux_0\cap Vx_0\ne\varnothing
\end{equation}
and in case this happens, we wish to understand
the asymptotics of the joint distribution of the set
\begin{align}\label{set1}
\set{(s,t)\in (\bR/\bZ)^2: a(y)\smallmat{1&0\\s&1}x_0=\smallmat{1&t\\0&1}x_0}.
\end{align}
Following the definitions one sees that $(s,t)$ is in the set in~\eqref{set1} if and only if 
there exists $\smallmat{k&\ell\\m&n}\in\SL_2(\bZ)$  
which solves the equation
\begin{equation}\label{first eq intro}
\smallmat{y^{-1}& 0\\ ys&y}\smallmat{k&\ell\\m&n}=\smallmat{1&t\\0&1}.
\end{equation}
Equation~\eqref{first eq intro} has a solution if and only if 
the following three conditions on the variables $y,s,t$ are satisfied (note that we may assume that $s,t\in(0,1]$):
(i) $y=k$ is a positive integer, (ii) $t=\frac{\ell}{k}$ for some $1\le \ell\le k$ with $\gcd(\ell,k)=1$, and (iii) $s=\frac{\ell^*}{k},$ where 
%for $1\le i\le k$ with $\gcd(i,k)=1$ we denote by 
$\ell^*$  is the number $1\le \ell^*\le k$ satisfying $\ell\cdot \ell^*= 1\on{mod} k$.

To summarize, if~\eqref{eqintne} holds then $y=k$ for some positive integer $k$ and the set in~\eqref{set1} equals
\begin{equation}\label{set3}
\set{(\ell/k, \ell^*/k)\in\pa{\bR/\bZ}^2: 1\le \ell\le k, \gcd(\ell,k)=1}.
\end{equation}
Taking into account both descriptions~\eqref{set1}, \eqref{set3} and our identification $(\bR/\bZ)^2\cong Ux_0\times Vx_0$ we see that 
this set is exactly $\wt{a}(k)\wt{\cP}_k$ which appears as the support of the measure in Theorem~\ref{JED}. If we denote the normalized counting 
measure on~\eqref{set3} by $\theta_k$ then 
the statement of Theorem~\ref{JED} is interpreted as saying that $\theta_k$ equidistributes in $(\bR/\bZ)^2$ as $k\to\infty$. 
This equidistribution
statement translates to estimating Kloosterman sums
 \begin{equation}\label{K sum}
K(a,b,k)=\sum_{\displaystyle^{1\le x\le k}_{\gcd(x,k)=1}} e^{2\pi i(ax+bx^*)}=\phi(k)\int_{(\bR/\bZ)^2} e^{2\pi i(at+bs)}d\theta_k,
\end{equation} 
where $\phi(k)$ is the Euler function.
The known estimates for Kloosterman sums (see for example ~\cite[p.\ 48 eq.\ (2.25)]{IwaniecSMAF}), imply that for any choice of $(a,b)\in\bZ^2$ not both 0, 
$\phi(k)^{-1}K(a,b,k)\to 0$ as $k\to\infty$ which establishes the desired equidistribution of the $\theta_k$'s. 

This establishes 
the case $m=n=1$ in Theorem~\ref{JED}. The main objective of this paper is to establish the case $m\ge 2$ using techniques 
from homogeneous dynamics.

\subsection{A basic observation and the structure of the proof}\label{m disc}
Let us define similarly to~\eqref{Pktilde}, \eqref{wtmuk},
\begin{equation}\label{Pk}
\cP_k\defi \set{x_{k^{-1}\bu}:\bu\textrm{ is $k$-primitive}}\subset Ux_0\subset X_d,
\end{equation}
\begin{equation}\label{muk}
\bm{\mu}_k\defi\frac{1}{\av{\cP_k}}\sum_{x\in\cP_k}\del_x.
\end{equation}

Using the fact that the $a(y)$-action is mixing on $X_d$ one can show that
%~\cite{R. Bowen weak mixing Israel journal, W. A. Veech Unique erg 76} 
that the pushed periodic orbit $a(y)Ux_0$ equidistributes in $X_d$ when $y\to\infty$, as $U$ is the expanding horospherical subgroup of $a(y)$ (see e.g.\ \cite{MargulisThesis}). On the other hand, the collection $\cP_k\subset Ux_0$ is composed of rational points $x$ for which the trajectories $\set{a(y)x}_{y>1}$ are divergent in $X_d$. It is therefore natural to investigate the tension between these two facts and to analyze the distribution of $a(y_k)\cP_k$
in $X_d$ for various choices of sequences $y_k\to\infty$. Setting~$y_k=k$ as in Theorem~\ref{JED} is natural because of the following lemma (which 
is in some sense the starting point of our discussion).\footnote{See Remark~\ref{rem1} for other natural choices of $y_k$'s.}
\begin{lemma}[Basic Lemma]\label{basic lemma}
For any positive integer $k$, $a(k)\cP_k\subset Hx_0$. 
\end{lemma}
We prove Lemma~\ref{basic lemma} towards the end of this section. It shows that we cannot expect the sequence $\wt{a}(k)_*\wt{\bm{\mu}}_k$ to equidistribute in $Ux_0\times X_d$ as any limit point of this sequence is clearly a measure\footnote{Apriori the limit measures is not even known to be a probability measure.} supported in $Ux_0\times Hx_0$.

The space $X_m=\SL_m(\bR)/\SL_m(\bZ)$ is naturally embedded in $Hx_0\subset X_d$, simply by identifying $\SL_m(\bR)$ as a subgroup of $H$ in
the obvious way. 
Throughout we alternate between thinking of $X_m$ as a subset of $X_d$ and as the space of $m$-dimensional unimodular lattices in $\bR^m$. 
When thinking of $X_m$ as the space of $m$-dimensional lattices, the identity coset $x_0$ corresponds to the lattice $\bZ^m$. 
There is a natural projection $\pi_3:Hx_{0}\to X_m$ 
defined as follows: $$\textrm{For $x=\smallmat{h&\mb{v}\\0&I}x_0\in Hx_{0}$, $\pi_3(x)\defi\smallmat{h&0\\0&I}\SL_d(\bZ)$ }$$
which corresponds to the $m$-dimensional unimodular lattice
$h\bZ^m$. 
Let 
\begin{equation}\label{nuk}
\bm{\nu}_k\defi (\pi_3)_*a(k)_*\bm{\mu}_k, 
\end{equation}
and consider the diagram with natural projection maps
\begin{equation}\label{diagram 1}
\xymatrix{
 &\pa{Ux_0\times Hx_{0},\wt{a}(k)_*\wt{\bm{\mu}}_k}\ar[dl]_{\pi_1}\ar[dr]^{\pi_2} & \\
 \pa{Ux_0,\bm{\mu}_k}& & \pa{Hx_{0},a(k)_*\bm{\mu}_k}\ar[d]^{\pi_3}\\
 & & \pa{X_m,\bm{\nu}_k}
}
\end{equation}
We describe the structure of the proof of Theorem~\ref{JED} using the diagram~\eqref{diagram 1}: 
\begin{enumerate}
\item[Step 1.] Establish the convergence
$\bm{\mu}_k\wstar \mb{m}_{Ux_0}$, which takes place on the left side of
diagram~\eqref{diagram 1}. 
\item[Step 2.] Show the convergence $\bm{\nu}_k\wstar (\pi_3)_*\bm{\theta}$, which equals $\mb{m}_{X_m}$ in case $m>n$, and the dirac measure
$\del_{x_0}$ in case $m=n$. This convergence takes place on the right side at the bottom of diagram~\eqref{diagram 1}.
\item[Step 3.]
Use the second step to establish the convergence $a(k)_*\bm{\mu}_k\wstar \bm{\theta}$ taking place on the right side of diagram~\eqref{diagram 1}.  
Many of the ideas appearing in the argument of Step 3 already appear in simplified versions in the proof of Step~1.
\item[Step 4.]
Combine Step 1 and Step 3 and use a disjointness argument to prove the convergence
$\wt{a}(k)_*\wt{\bm{\mu}}_k\wstar \mb{m}_{Ux_0}\times \bm{\theta}$ taking place at the top of diagram~\eqref{diagram 1}.
 \end{enumerate}
We comment here that one could prove Step 1 quite easily using Fourier transform arguments. We choose a different approach that 
is more compatible with the proofs of the other steps (see the footnote following Lemma~\ref{s.classification}).
\subsection{The method of proof}\label{sec mop}
%\quad
%\comuri{coauthors, a new subsection here}\comme{moved earlier and added definition of Lambda}

From this point and on we will assume that $m\ge 2$. 
As described above in the course of the proof of Theorem~\ref{JED} we will frequently need to 
establish a convergence $\eta_n\wstar \eta$ of probability measures. It will turn out that all the measures involved are 
$\Lam$-invariant under natural actions of the group\footnote{
Note that the assumption that $m\ge 2$ is equivalent to $\Lam$ being non-trivial.} 
\[
\Lam\defi\set{\smallmat{\del_1&0\\0&\del_2}:\del_1\in\operatorname{SL}_m(\bZ),\del_2\in\operatorname{SL}_n(\bZ)}.
\]
The following is the strategy we will use for proving such a claim: 
One starts by \textit{classifying} the $\Lam$-ergodic measures and shows that there are only countably many such. Say 
$\set{\sig_i}_{i=0}^\infty$, with $\eta=\sig_0$. 
Then, given an accumulation point $\eta'$ of $\eta_n$, the ergodic decomposition of $\eta'$ is given by 
$\eta'=\sum_{i=0}^\infty c_i \sig_i$ with $c_i\ge0$ and $\sum c_i=1$. 
One then appeals to a \textit{non-accumulation} result to show that the only possibility of $c_i$ being positive is that 
$\on{supp}(\eta_n)\subset \on{supp}(\sig_i)$ for infinitely many $n$'s. One then verifies that the only $i$ for which
such an inclusion is possible is $i=0$.

The above strategy is built out of (i) a non-accumulation result and (ii) a measure classification result. 
In~\S\ref{n.a} we prove the non-accumulation result.
 In~\S\ref{Lambda action} we prove various measure classification results. In~\S\ref{proof of jed} we use the above strategy 
 to prove Theorem~\ref{JED} through Steps 1-- 4 which are described after~\eqref{diagram 1}.

\subsection{Elementary divisors}

%\comme{Shahar and I thought it is better to have this elementary material early on and before it is used. Yes, it moves the proof of the basic lemma a bit later but not really -- as the reader previously had to jump back to see what is really used to prove it.}}

In this subsection we collect some further preliminaries that will be needed throughout the paper.
We shall use the following theorem, the proof of which can be found in~\cite[Appendix Lemma A2, Theorem A1]{SDbriefguide}. 

\begin{theorem}[Elementary divisors]\label{e.d.t}
Let $\Sig\ne\set{0}$ be a subgroup of $\bZ^m$. Then, there is an integer $1\le r\le m$  and positive integers $\ell_1\dots \ell_r$ such that $\ell_i | \ell_{i+1}$ and such that one can find a basis
$v_1\dots v_m$ of $\bZ^m$ for which $\ell_1 v_1,\ldots, \ell_r v_r$ form a basis for $\Sig$. Furthermore, the numbers $r, \ell_i$ with this property are unique and are called the rank and the elementary
divisors of $\Sig$ with respect to $\bZ^m$ respectively.
\end{theorem}

The following restatement of Theorem~\ref{e.d.t} will be more convenient to us.
\begin{lemma}\label{r.f.lemma}
Consider the action of $\smallmat{\del_1&0\\0&\del_2}\in\Lam$ on $\bu\in\Mat_{n\times m}(\bZ)$
 given by $\smallmat{\del_1&0\\0&\del_2}\bu=\del_2\bu\del_1^{-1}$.  
 Each $\Lam$-orbit in $\Mat_{n\times m}(\bZ)$ contains an element of the form 
\begin{equation}\label{reduced form}
\mb{u}=\mat{
 \ell_1& &0&\dots&0 \\ 
      0  & \ddots&0&\dots&0 \\
   0& &\ell_n&\dots&0      
}
\end{equation}
such that the $\ell_i$'s are integers satisfying $\ell_1|\ell_2|\dots|\ell_n$. Moreover, the integers $\ell_i$ are unique up to sign.
\end{lemma}
\begin{proof}
The lemma follows from an application of Theorem~\ref{e.d.t} to the group $\Sig$ generated by the rows of $\bu$ in $\bZ^m$. 
Note that if $\on{rank}(\Sig)=r<n$ then the tuple $(\ell_1\dots\ell_n)$ ends with $n-r$ zeros. 
Note also that the only case where we cannot require that $\ell_i\ge 0$ 
is when $n=m$ and $\mb{u}$ is an invertible matrix with negative determinant.  
\end{proof}
Let us refer to the tuple $(\ell_1,\ldots,\ell_n)$ attached to the $\Lam$-orbit of $\bu$ as its \textit{elementary divisors tuple}.

%\subsection{Back to $k$-primitivity}\label{bkp}
As Definition~\ref{def primitive} of $k$-primitivity is invariant under the $\Lam$-action, it is clear that $\bu$ is $k$-primitive if and only if $\ell_n$ is coprime to $k$. For the proof of Lemma~\ref{basic lemma} we will use the following characterization of $k$-primitivity.
\begin{lemma}\label{lemprimc}
The matrix $\mb{u}\in\Mat_{n\times m}(\bZ)$ is $k$-primitive if and only if there exists a matrix $\gamma\in\SL_d(\bZ)$ whose bottom $n$ rows coincide with the $n\times d$ matrix $\mat{\mb{u} & kI_n}$. 
\end{lemma}
\begin{proof}
It is straightforward to show that the property described in the statement of the lemma is also invariant under the action of $\Lam$. It follows that it is enough to verify 
the validity of the statement for matrices $\bu$ in the form~\eqref{reduced form}. Clearly if  the elementary divisor $\ell_n$ is not coprime to $k$, i.e.\ if $\bu$ is not $k$-primitive, then the bottom row of $\mat{\bu&k I_n}$ is not a primitive vector in $\bZ^d$ and so the existence of $\gamma$ as in the statement is ruled out. If on the other hand $\bu$ is $k$-primitive then the elementary divisors $\ell_i$ are all coprime to $k$. It follows that there are integers $e_i,f_i$ so that
$\det\smallmat{e_i&f_i\\ \ell_i&k}=1.$ The determinant of the $d\times d$ matrix 
\begin{comment}
\begin{equation}\label{bigmatrix}
\gamma\defi\mat{
\diag{e_1\dots e_n} & 0_{n\times (m-n)} & \diag{f_1\dots f_n}\\
0_{(m-n)\times n} &I_{(n-m)}&0_{(m-n)\times n}\\
\diag{\ell_1\dots\ell_n}&0_{n\times(m-n)}&\diag{k\dots k}
}
\end{equation}
\end{comment}
\begin{equation}\label{bigmatrix}
\gamma\defi\mat{
\diag{e_1\dots e_n} & 0& \diag{f_1\dots f_n}\\
0 &I_{n-m}&0\\
\diag{\ell_1\dots\ell_n}&0&\diag{k\dots k}
}
\end{equation}
equals $\prod_1^n\det\smallmat{e_i&f_i\\ \ell_i&k}=1$ (because after conjugating by a suitable permutation matrix, the matrix $\ga$
transforms into a block-diagonal matrix with diagonal blocks being equal to the $\smallmat{e_i&f_i\\ \ell_i&k}$ complemented 
by $m-n$ 1's). 
\end{proof}

\subsection{Proof of the Basic Lemma}

%\me{Using the information recalled in the previous subsection we can now analyse the set~$a(k)\cP_k$ more carefully.}

\begin{proof}[Proof of Lemma~\ref{basic lemma}]
Note that in order to show that a point $x=gx_0\in X_d$ lies in $Hx_{0}$, one needs to show that there exists $\ga\in \Ga$ such that $g\ga\in H$.
Let $\bu\in\Mat_{n\times m}(\bZ)$ be $k$-primitive and $a(k)x_{k^{-1}\textbf{u}}\in a(k)\cP_k$ be the corresponding point. 
By Lemma \ref{lemprimc} $\bu$ is 
 $k$-primitive  if and only if there exists a matrix $\ga^{-1}\in \Ga$ whose bottom $n$ rows are given by the rows of the $n\times d$ matrix 
$\pa{\textbf{u}\; kI_n}$. It follows that if we denote by $A_\ga,B_\ga,C_\ga,D_\ga$ the block components of $\ga$, then
\begin{align}\label{basic eq}
a(k)\smallmat{I&0\\k^{-1}\textbf{u}&I}\ga&= \smallmat{ k^{-\frac{n}{m}}I& 0\\ \mb{u} & k I} \smallmat{A_\ga& B_\ga\\ C_\ga& D_\ga}
=\smallmat{k^{-\frac{n}{m}}A_\ga & k^{-\frac{n}{m}} B_\ga \\ 0 & I}.
\end{align}
The above equation shows that the point $a(k)x_{k^{-1}\textbf{u}}$ belongs to $Hx_{0}$ as desired.
\end{proof}

\begin{remark}\label{rem0}
Equation~\eqref{basic eq} (which is an analogous to~\eqref{first eq intro}), is fundamental for our discussion and deserves some attention.
We note two things:
\begin{enumerate}
\item\label{r.4.4 i.2} 
Let $\bu$ be $k$-primitive and suppose $\ga$ solves~\eqref{basic eq}. By considering determinants 
we see that $A_\ga\in\Mat_m(\bZ)$ must have determinant $k^n$. This means that when considered as a lattice in $\bR^m$,
$\pi_3(a(k)x_{k^{-1}\bu})$ equals $k^{-\frac{n}{m}}A_\ga\bZ^m$, and up to homothety equals $A_\ga\bZ^m$ which is  a 
subgroup of index $k^n$ of $\bZ^m$. In Lemma~\ref{link to Hecke friends} we show that  the collection 
$\set{\pi_3(a(k)x_{k^{-1}\bu}):\bu\textrm{ is }k\textrm{-primitive}}$ consists of all such lattices of a given \textit{Hecke-type} (see 
Definition \ref{def ht} for terminology). 
\item\label{r.4.4 i.3} Assume again that $\bu$ is $k$-primitive and that $\ga$ solves~\eqref{basic eq}. The first
$m$ columns  of $\ga$ form a basis for the  discrete group of rank $m$ which we denote by $\Lam_{(\bu,k)}$, 
consisting of  integer vectors in the orthocomplement of the linear space spanned by the 
rows of the $n\times d$ matrix $\mat{\bu&kI}$. It follows that $\pi_3(a(k)x_{k^{-1}\bu})$, as a lattice in $\bR^m$,  is 
(up to homothety) the projection of $\Lam_{(\bu,k)}$ onto the copy of $\bR^m$ given by the first $m$-coordinates.
This is what furnishes the link with Schmidt's theorem and its strengthening given in Theorem~\ref{generalized Schmidt}.
\end{enumerate}
\end{remark}

\begin{remark}\label{rem1}
Let $y_k\to\infty$ be given. By Lemma~\ref{basic lemma} $a(y_k)\cP_k=a(y_k/k)a(k)\cP_k\subset a(y_k/k)Hx_0$. Thus, if we set $y_k=k^\al$ for some positive $\al$, then we obtain that $a(k^\al)\cP_k\subset a(k^{\al-1})Hx_0$. In the case $\al>1$, the collection $a(k^\al)\cP_k$ is therefore 
contained in the  uniformly divergent periodic orbit $a(k^{\al-1})Hx_0$, and in particular, the sequence of measures $a(k^\al)_*\bm{\mu}_k$ converges to the zero measure on $X_d$. In case $\al<1$, the collection $a(k^\al)\cP_k$ is therefore contained in the equidistributing periodic orbit $a(k^{\al-1})Hx_0$. It turns out that one can prove an analogue of Theorem~\ref{JED}
 in this context and show that for $0<\al<1$, $\wt{a}(k^\al)_*\wt{\bm{\mu}}_k\wstar \mb{m}_{Ux_0}\times \mb{m}_{X_d}$ (for any $m\ge n\ge 1$ including the case $n=m=1$). This analysis is non-trivial and we plan on elaborating on this in a future manuscript.

\end{remark}

\section{Non-accumulation}\label{n.a}
%%%%%%%%%%%%%%%%
Our goal in this section is to prove a certain non-accumulation result -- Theorem~\ref{n.a.t} -- that will be used in various steps in the proof of Theorem~\ref{JED}. 
Although we do not aim at greatest generality, we still choose to state and prove the results in a somewhat abstract setting  (if only to isolate the necessary features that are needed for the result to hold). To this end, in this section (and in it only) we abandon the notation presented so
far and assume the following: Let $G$ be a real Lie group, let $\Ga<G$ be a lattice, and let $\Lam,L<G$ be closed 
subgroups with $\Lam$ being discrete and generated by finitely many $\on{Ad}$-unipotent elements. Assume furthermore that there is a decomposition
\begin{equation}\label{i.dec}
\lie(G)=\lie(L)\oplus W
\end{equation}
 such that $W$ is invariant under the action of $\Lam$ via the adjoint representation.
Let $X=G/\Ga$ and $z\in X$ be a point such that the orbit $Lz$ is periodic and $\Lam$-invariant. 
\begin{theorem}[Non-accumulation]\label{n.a.t}
Let $G,\Ga,\Lam,L,z,W$ and $X$ be as above and assume that the $\Lam$-representation on $W$ does not contain any fixed vectors. Let $P_k\subset X$ be a sequence of finite $\Lam$-invariant sets and $\mu_k$ the normalized counting measure on $P_k$. If $P_k\cap Lz=\varnothing$ for all $k$ then any
weak$^*$ accumulation point $\sig$ of $\set{\mu_k}_{k=1}^\infty$ satisfies $\sig(Lz)=0$.  
\end{theorem}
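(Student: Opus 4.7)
The plan is to argue by contradiction, combining a linearization of the $\Lam$-action transversal to $Lz$ with a Margulis-type height function. Suppose along a subsequence $k_j$ one has $\mu_{k_j}\to\sig$ weak-$*$ with $\sig(Lz)>0$. Using the $\Lam$-invariant splitting $\lie(G)=\lie(L)\oplus W$, the map $(l,w)\mapsto l\exp(w)z$ is a local homeomorphism near the zero section and yields a tubular neighborhood $N$ of $Lz$ equipped with a \emph{transversal coordinate} $w(p)\in W$, well-defined modulo the stabilizer $L_z=L\cap\on{Stab}_G(z)$ and vanishing precisely on $Lz$. The hypothesis $P_k\cap Lz=\varnothing$ forces $w(p)\ne 0$ for every $p\in P_k\cap N$.

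A preliminary dichotomy: since $\Lam$ normalizes $L$ it permutes the collection of $L$-orbits of $X$. If the $\Lam$-orbit of $Lz$ is infinite, then the $\Lam$-invariance of $\sig$ and the disjointness of distinct $L$-orbits yield infinitely many disjoint subsets of equal positive $\sig$-measure, contradicting $\sig(X)=1$; thus $\Lam_{Lz}=\set{\lam:\lam Lz=Lz}$ has finite index in $\Lam$. After replacing $\Lam$ by $\Lam_{Lz}$ (and replacing the Ad-unipotent generators $u_1,\ldots,u_r$ by suitable powers lying in $\Lam_{Lz}$, which remain Ad-unipotent and without a common fixed vector in $W$) I may assume $\Lam$ preserves $Lz$.

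Linearization: for $\lam\in\Lam$ pick $l_\lam\in L$ with $\lam z=l_\lam z$. A Baker--Campbell--Hausdorff computation shows that, for $p=l\exp(w)z$ with $\|w\|$ sufficiently small, $\lam p = l'\exp\bigl(A_\lam w + O(\|w\|^2)\bigr)z$, where $A_\lam\colon W\to W$ is the projection along $\lie(L)$ onto $W$ of $\on{Ad}(l_\lam^{-1}\lam)|_W$. Since $\on{Ad}(\lam)$ preserves $W$ by hypothesis, $A_\lam$ differs from $\on{Ad}(\lam)|_W$ only by the inner term $\on{Ad}(l_\lam^{-1})$, and the family $\set{A_\lam}_{\lam\in\Lam}$ still has no common nonzero fixed vector in $W$. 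The key lemma of the argument is that for every $v\in W\setminus\set{0}$ the orbit $\set{A_\lam v:\lam\in\Lam}$ is unbounded — indeed a bounded orbit would be bounded under each Ad-unipotent generator $u_i$, but a unipotent linear operator on a finite-dimensional real vector space has bounded orbits only on its fixed set, which would force $v$ to be a common fixed vector.

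The endgame uses a Margulis-type height function of the form $f(p)=\|w(p)\|^{-\al}$ inside a small sub-tube of $N$, extended boundedly on the rest of $X$, for some small $\al>0$. The averaging operator $Uf(x)=\frac{1}{\#F}\sum_{\lam\in F}f(\lam x)$, taken over a carefully chosen finite $F\subset\Lam$, satisfies a contraction inequality $Uf\le \beta f+C$ for some $\beta<1$ and $C>0$: the key lemma together with compactness of the unit sphere in $W$ lets one pick $F$ so that for every direction $\hat v\in W$ the average of $\|A_\lam\hat v\|^{-\al}$ over $\lam\in F$ is strictly less than one, with $\al$ tuned to balance the few contracting $\lam$ against the expanding ones. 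Then $\Lam$-invariance of $\mu_k$ gives $\int f\,d\mu_k=\int Uf\,d\mu_k\le \beta\int f\,d\mu_k+C$, hence $\int f\,d\mu_k\le C/(1-\beta)$ uniformly in $k$. Passing to the weak-$*$ limit yields $\int f\,d\sig<\infty$ by Fatou, which is incompatible with $\sig(Lz)>0$ since $f$ is unbounded on every neighborhood of $Lz$. The principal technical obstacle is producing the pair $(F,\al)$ for which the contraction $Uf\le\beta f+C$ holds globally: this requires a uniform lower bound for the expansion of $A_\lam$ over $\lam\in F$ on a full neighborhood of the unit sphere of $W$, together with uniform control of the quadratic remainder in the linearization across $F$, which forces $F$ to be chosen before the size $\eps$ of the sub-tube where $f=\|w\|^{-\al}$.
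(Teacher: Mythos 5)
Your overall strategy (contradiction via a Margulis-type height function and a contraction inequality for the averaging operator over a finite subset of $\Lam$) is a genuinely different route from the paper's, but as written it has two gaps, and the second one sits exactly at the heart of the argument. First, the linearization: your operators $A_\lam=\on{proj}_W\circ\on{Ad}(l_\lam^{-1}\lam)|_W$ depend on the choices $l_\lam$ and on the projection along $\lie(L)$, and $\lam\mapsto A_\lam$ is not a homomorphism (it is at best a cocycle over the $\Lam$-action on the orbit $Lz$); in particular $A_{u_i^n}\neq A_{u_i}^n$ in general, and $A_{u_i}$ need not be a unipotent operator even though $\on{Ad}(u_i)|_W$ is. Hence both assertions your "key lemma" rests on --- that $\set{A_\lam}$ has no common nonzero fixed vector, and that every nonzero $v$ has unbounded orbit because a unipotent operator has bounded orbits only on its fixed set --- are not consequences of the hypothesis, which concerns $\on{Ad}(\Lam)|_W$ only. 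The paper sidesteps this entirely: since $W$ is $\on{Ad}(\Lam)$-invariant, one writes $u_j^ny=\exp\pa{\on{Ad}_{u_j}^n(w_y)}u_j^nx_y$ and lets the base point move, so the transversal coordinate transforms \emph{exactly} by $\on{Ad}(u_j)^n$, with no projection, no choice of $l_\lam$, and no quadratic remainder, and $n\mapsto\norm{\on{Ad}_{u_j}^n(w_y)}^2$ is an honest nonconstant polynomial of bounded degree.

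Second, and more seriously, the contraction inequality $Uf\le\beta f+C$ is not established, and the proposed derivation from "each orbit is unbounded" plus compactness of the unit sphere does not work. To first order in $\al$ one has $\frac{1}{\av{F}}\sum_{\lam\in F}\norm{A_\lam\hat v}^{-\al}=1-\frac{\al}{\av{F}}\sum_{\lam\in F}\log\norm{A_\lam\hat v}+O(\al^2)$, so you need a uniform positive drift $\sum_{\lam\in F}\log\norm{A_\lam\hat v}\ge c>0$ over the whole sphere; picking for each $\hat v$ one element expanding by a large threshold $R$ does not control the possibly strongly contracting remaining terms, and as $R$ grows the set $F$ and the norms $\norm{A_\lam^{\pm1}}$ change with it, so the inequality never closes. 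In the Eskin--Margulis/Benoist--Quint framework such contraction comes from positivity of Lyapunov exponents for a random walk (Furstenberg), i.e.\ from strictly stronger input than "no $\Lam$-fixed vectors in $W$", and in the generality of this theorem (reducible $W$, no irreducibility or moment assumptions) it would have to be proved, not asserted; you yourself flag this as the principal obstacle but do not resolve it. By contrast, the paper's proof needs no uniform expansion at all: for each point it uses polynomial divergence along a single generator, the elementary Lemma~\ref{polylemma}, and a disjointification of the orbit segments $V_y$, yielding the explicit bound \eqref{final n.a.est} uniformly in $k$ --- a much more elementary mechanism that also gives a quantitative rate $\psi(\eps)$. (Your preliminary reduction to a finite-index subgroup preserving $Lz$, and the final Fatou/lower-semicontinuity step, are fine; the issues above are the ones that need to be repaired.)
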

Below we use the absolute value symbol $\av{\cdot}$ to denote the usual absolute value of a real number as well as the Lebesgue measure of a set and the cardinality of a finite set. This should not cause any confusion.
%%%%%%
In the proof of Theorem \ref{n.a.t} we will need to use some elementary properties of polynomials which we now recall. The following
lemma may be found at~\cite[Proposition 3.2.2]{KSS}.
\begin{lemma}\label{polylemma0}
For any degree $d$ there exist a constant $c_d>0$ such that for any polynomial $p:\bR\to\bR$ of degree bounded by $d$, for any interval $I\subset \bR$ 
we have that
if $\rho=\max\set{\av{p(x)}:x\in I}$ then for any $0<\eps\le\rho$
\begin{equation}\label{polybound0}
\frac{\av{\set{x\in I: \av{p(x)}\le\eps }}}{\av{I}}\le c_d\pa{\frac{\eps}{\rho}}^{\frac{1}{d}}.
\end{equation}
\end{lemma}
We deduce the following integer value version of this lemma.
\begin{lemma}\label{polylemma}
For any degree $d$ there exist a constant $c_d>0$ such that for any polynomial $p:\bN\to\bR$ of degree bounded by $d$, for any interval $J\subset \bN$  we have that
if $\rho=\max\set{\av{p(n)}:n\in J}$ then for any $0<\eps\le\rho$
\begin{equation}\label{polybound}
\frac{\av{\set{n\in J: \av{p(n)}\le\eps }}}{\av{J}}\le c_d\pa{\frac{\eps}{\rho}}^{\frac{1}{d}}+\frac{d}{\av{J}}.
\end{equation}
\end{lemma}
\begin{proof}
Let $I\subset\bR$ denote the real interval defined as the convex hull of $J$. For $\eps>0$ let $I_\eps\defi \set{x\in I: \av{p(x)}\le\eps }$ and $J_\eps\defi \set{n\in J: \av{p(n)}\le\eps }$.
The set $I_\eps$ is a disjoint union of finitely many closed intervals $I_{\eps,\ell}$ for $\ell=1... k$.  We have that
\begin{align*}
\av{ J_\eps }=& \av{{I_\eps\cap\bZ}}=\sum_\ell \av{{I_{\eps,\ell}\cap\bZ}}\\
=&\sum_{{\ell: \av{{I_{\eps,\ell}\cap\bZ}}=1}} 1 + \sum_{{\ell: \av{{I_{\eps,\ell}\cap\bZ}}>1}}\av{{I_{\eps,\ell}\cap\bZ}}\\
\le& d + \sum_{{\ell: \av{{I_{\eps,\ell}\cap\bZ}}>1}}2\av{I_{\eps,\ell}}\le d + 2 I_\eps ,
\end{align*}
where we used that if $I_{\eps,\ell}$ contains more than one integer
then 
\[
\av{{I_{\eps,\ell}\cap\bZ}}\le 2\av{I_{\eps,\ell}},
\]
and also, the number of $\ell$'s for which $I_{\eps,\ell}$ contains a single integer is bounded by the 
degree $d$ of the polynomial because between each such two intervals there must be a zero of the derivative.

The inequality~\eqref{polybound} now follows from~\eqref{polybound0} (with a slightly bigger constant $c_d$).
\end{proof}

%%%
%%%
%%%

\begin{proof}[Proof of Theorem~\ref{n.a.t}]
Let $\sig$ be a weak$^*$ accumulation point as in the statement. 
Let $\Om_1\subset X$ be a compact set. 
We will show that for $K_1\defi \Om_1\cap Lz$ one has $\sig(K_1)=0$. This 
is enough as $Lz$ is a countable union of such sets. 
Choose some norm on $\lie(G)$ and denote by $B_\eps^W$ the ball of radius $\eps$ around $0$ in $W$.
Choose an open set $\wt{\Om}_1$ and a compact set $\Om_2$ such that (i) $\Om_1\subset\wt{\Om}_1\subset \Om_2$, (ii) 
$\wt{\Om}_1\cap\pa{\exp B_1^W\cdot\pa{X\smallsetminus\Om_2}}=\varnothing$; in other words, $\Om_2$ is big enough so that one cannot reach $\wt{\Om}_1$ by acting on points outside of 
$\Om_2$ by elements of the form
$\exp w$, where $w\in W$ is of norm $\le 1$. Let $\wt{K}_1\defi \wt{\Om}_1\cap Lz$ and $K_2\defi \Om_2\cap Lz$ and for any $\eps>0$ any subset $F\subset  Lz$ denote $\cT(\eps,F)\defi\set{\exp (w) x:x\in F, w\in W\mbox{, and }\norm{w}\le\eps}$ the $\eps$-\textit{tube around} $F$. 

There exist $0<\eps_0<1/2$ small enough so that the map 
$(w,x)\mapsto \exp(w) x$ from $B_{\eps_0}^W\times K_2\to \cT(\eps_0,K_2)$ is a homeomorphism onto
its image. This gives a natural coordinate system on the $\eps_0$-tube around $K_2$; we denote for $y\in \cT(\eps_0, K_2)$ by $w_y\in B_{\eps_0}^W$ the 
$W$-\textit{coordinate} and by $x_y\in Lz$ the \textit{orbit-coordinate} so that the identity $y=\exp(w_y) x_y$ holds for $y\in \cT(\eps_0, K_2)$.

Let $u_1,\dots,u_r\in \Lam$ be $\on{Ad}$-unipotent elements that generate $\Lam$. 
Let $0\le \eps< \eps_0$ and set
$$\cS_j(\eps, \wt{K}_1)\defi\set{y\in\cT(\eps, \wt{K}_1): \on{Ad}_{u_j}(w_y)\ne w_y},$$ $j=1\dots r$, so that 
$\cT(\eps,\wt{K}_1)\smallsetminus \wt{K}_1= \cup_1^r\cS_j(\eps,\wt{K}_1)$. The inclusion $\supset$ is clear. The other 
inclusion holds because 
$\Lam$ is generated by the $u_j$'s and $W$ contains no $\Lam$-fixed non-zero vectors. We will find
a function $\psi(\eps)\to_{\eps\to0} 0$ such that for any $j$
\begin{equation}\label{r.est}
\frac{\av{P_k\cap \cS_j(\eps,\wt{K}_1)}}{\av{P_k}}\le \psi(\eps)\quad \textrm{for any $k$ }.
\end{equation}
Since $P_k\cap Lz = \varnothing$ this implies that 
$$\frac{\av{P_k\cap \cT(\eps, \wt{K}_1)}}{\av{P_k}}\le\sum_1^r\frac{\av{P_k\cap \cS_j(\eps,\wt{K}_1)}}{\av{P_k}}\le r\psi(\eps)\to_{\eps\to 0}0.$$ In turn, this implies that $\sig(K_1)=0$ as desired because $\cT(\eps, \wt{K}_1)$ is an open set containing the compact set $K_1$.  

To this end, fix $1\le j\le r$  and denote for each $y\in \cS_j(\eps,\wt{K}_1)$, 
$$p_y(n)\defi \norm{\on{Ad}_{u_j}^n(w_y)}^2.$$ 
Note that
as $u_j$ is $\on{Ad}$-unipotent it follows that (for an appropriate choice of a norm $\norm{\cdot}$), $p_y(n)$  is a non-constant polynomial in $n$ of degree
$\le d$ for some integer $d$ depending on $\dim G$ only.
Let us use the following notation
%$$n_y\defi \max\set{n\ge 0: p_y(n)\le \eps_0^2};\; J_y\defi [0,n_y]\cap \bZ;\; V_y\defi\set{u_j^ny: n\in J_y}.$$
\begin{itemize}
\item $n_y\defi \max\set{n\ge 0: p_y(k)\le \eps_0^2\textrm{ for } 0\le k\le n}$.
\item $J_y\defi [0,n_y]\cap \bZ$.
\item $V_y\defi\set{u_j^ny: n\in J_y}$.
\item $M\defi \max_{j=1\dots r} \norm{\on{Ad}_{u_j}^{\pm 1}}$.
\end{itemize} 
Observe that $n_y$ is finite as $p_y(n)$ is non-constant. 
We will shortly show that the following properties hold for $y,y_1,y_2\in \cS_j(\eps, \wt{K}_1)$.
\begin{enumerate}
\item\label{n.a.p.1}$V_{y_1}\cap V_{y_2}\ne \varnothing\Longrightarrow V_{y_1}\subset V_{y_2}$ or $V_{y_2}\subset V_{y_1}$.
\item\label{n.a.p.2}  $V_y\cap \cS_j(\eps, \wt{K}_1)\subset\set{u_j^n y: p_y(n)\le\eps^2}$.
\item\label{n.a.p.2.4}  $p_y(n_y)\ge (\frac{\eps_0}{M})^2.$
\item\label{n.a.p.2.5}  $\av{J_y}\ge \log(\frac{\eps_0}{\eps})/\log M.$
\end{enumerate}
%We finish the argument assuming properties~\eqref{n.a.p.1},\eqref{n.a.p.2} postponing their validation to the end.
We now conclude the proof using properties~\eqref{n.a.p.1}-\eqref{n.a.p.2.5}.
Given any $k$, we choose a finite collection  $\set{y_i}\subset P_k\cap \cS_j(\eps, \wt{K}_1)$  so that the $V_{y_i}$'s are 
maximal with respect to inclusion among $\set{V_y:y\in P_k\cap \cS_j(\eps, \wt{K}_1)}$, and such that $P_k\cap \cS_j(\eps,\wt{K}_1)\subset \bigcup_i V_{y_i}$.
By property~\eqref{n.a.p.1} we deduce that the $V_{y_i}$'s are disjoint. Since $V_{y_i}\subset P_k$, we deduce that $\sum_i\frac{\av{J_{y_i}}}{\av{P_k}}\le 1$. 
It follows that
\begin{align*}
\frac{\av{P_k\cap \cS_j(\eps,\wt{K}_1)}}{\av{P_k}}&= \frac{\av{\cup_i V_{y_i}\cap \cS_j(\eps,\wt{K}_1)}}{\av{P_k}}
\le \sum_i \frac{\av{J_{y_i}}}{\av{P_k}}\cdot \frac{\av{V_{y_i}\cap \cS_j(\eps, \wt{K}_1)}}{\av{J_{y_i}}}
\\
&\overset{\textrm{{\tiny by \eqref{n.a.p.2}}}}
{\le} \sum_i \frac{\av{J_{y_i}}}{\av{P_k}}\cdot \frac{\av{\set{n\in J_{y_i}: p_{y_i}(n)
\le \eps^2}}}{\av{J_{y_i}}}
\\
&\overset{\textrm{{\tiny by \eqref{n.a.p.2.4},\eqref{n.a.p.2.5} and \eqref{polybound}}}}
{\le} c_d \pa{\frac{\eps}{\eps_0/M}}^{2/d} + \frac{d}{\log(\frac{\eps_0}{\eps})/\log M}.
\end{align*}
As this last expression goes to $0$ as $\eps\to 0$ we conclude that~\eqref{r.est} holds
with this last expression as $\psi(\eps)$.
It remains to verify the validity of~\eqref{n.a.p.1}-\eqref{n.a.p.2.5}.

\eqref{n.a.p.1}. Assume $y_1,y_2\in \cS_j(\eps,K_1)$ are such that $V_{y_1}\cap V_{y_2}\ne\varnothing$; 
that is, there exists $n_i\in J_{y_i}$ such that $u_j^{n_1} y_1=u_j^{n_2}y_2$. Assume without loss of generality that $n_2\le n_1$ and so $u_j^{n_1-n_2}y_1=y_2$
and $n_1-n_2\in J_{y_1}$. Following the definitions we  see that $V_{y_2}\subset V_{y_1}$ as desired. 

\eqref{n.a.p.2}. Let $y\in \cS_j(\eps, \wt{K}_1)$ and 
assume $n\in J_y$ is such that $p_y(n)>\eps^2$ so that we know that
$\eps < \norm{\on{Ad}_{u_j}^n(w_y)}\le \eps_0$.
We need to show that $u_j^ny\notin  \cS_j(\eps, \wt{K}_1)$.
We have that 
\begin{equation}\label{n.a.identity}
u_j^n y=u_j^n\exp(w_y) u_j^{-n} u_j^nx_y =  \exp\pa{\on{Ad}_{u_j}^n(w_y)} u_j^n x_y.
\end{equation}  
If $u_j^n x_y\notin \Om_2$ then~\eqref{n.a.identity} implies that $u_j^n y\in \exp B_{\eps_0}^W(X\smallsetminus\Om_2)$ which is
disjoint from $\cT(\eps_0, \wt{K}_1)$ by choice of $\Om_2$. So in particular, $u_j^ny\notin  \cS_j(\eps, \wt{K}_1)$. If on the other hand $u_j^n x_y\in \Om_2$, then as $u_j^nx_y\in Lz$ (because $Lz$ is $\Lam$-invariant), we deduce that $u_j^n x_y\in K_2$ which in turn implies by~\eqref{n.a.identity} that $u_j^ny\in\cT(\eps_0, K_2)$ and the orbit and $W$ coordinates of $u_j^ny$ are given by $ \on{Ad}_{u_j}^n(w_y)$ and 
$u_j^n x_y$ respectively. By the lower bound on the $W$-coordinate we deduce that $u_j^n y\notin \cT(\eps, K_2)$
and in particular, $u_j^n y\notin \cS_j(\eps, \wt{K}_1)$.  

\eqref{n.a.p.2.4}. We have that 
$\eps_0\le  \norm{\on{Ad}_{u_j}^{n_y+1}(w_y)}\le M\sqrt{p_y(n_y)}.$

\eqref{n.a.p.2.5}. Similarly $\eps_0\le  \norm{\on{Ad}_{u_j}^{n_y+1}(w_y)}\le M^{n_y+1}\eps.$

\end{proof}
\begin{corollary}\label{r.negligible}
The conclusion of Theorem~\ref{n.a.t} remains valid if the assumption $P_k\cap Lz=\varnothing$ is relaxed to $\frac{\av{P_k\cap Lz}}{\av{P_k}}\to0$ as $k\to\infty$.
\end{corollary}
\begin{proof}
We split $\mu_k=(1-\al_k) \mu_k^1+\al_k\mu_k^2$ with $\mu_k^i$, $i=1,2$ being the normalized counting measures on 
$P_k\smallsetminus Lz$ and $P_k\cap Lz$ respectively. In this case $\al_k=\frac{\av{P_k\cap Lz}}{\av{P_k}}\to0$ as $k\to\infty$ by
assumption and so the accumulation points of $\mu_k$ are the same as those of $\mu_k^1$ for which Theorem~\ref{n.a.t} applies.
\end{proof}

\section{$\Lam$-invariance and measure classifications}\label{Lambda action}

In this section we will show that all the measures appearing in our discussions  are invariant under a certain group $\Lam$. We will
then classify all the $\Lam$-invariant and ergodic probability measures in certain situations. This will serve us in the proof of Theorem~\ref{JED} along the lines described in~\S\ref{sec mop}.

\subsection{Invariance}
We return to use the notation introduced in \S\ref{section one}, \S\ref{seciam}.
In particular, recall that the subgroup $\Lam\simeq\SL_m(\bZ)\times\SL_n(\bZ)<\SL_d(\bZ)$ 
is defined by
\begin{equation}\label{Lambda}
\Lam\defi\set{\smallmat{\del_1&0\\0&\del_2}: \del_1\in\SL_m(\bZ),\del_2\in\SL_n(\bZ)}.
\end{equation}
Further, let $\Lam_\Del<G\times G$ denote the diagonal embedding of $\Lam$ in $G\times G$. 
%We sometimes omit the subscript $\Del$ when there is no risk of confusion.
\begin{lemma}\label{invariance under Lambda}
The following periodic orbits in either $X_d$ or $X_d\times X_d$ 
and probability measures supported on them are $\Lam$-invariant or $\Lam_\Del$-invariant respectively.
\begin{enumerate} 
\item\label{invlem1} The periodic orbit $Ux_0$ and the measures $\mb{m}_{Ux_0},\set{\bm{\mu}_k:k\in\bZ_{>0}}$.
\item\label{invlem2} The periodic orbit $Hx_0$ and the measures $\mb{m}_{Hx_0}$, $\set{a(k)_*\bm{\mu}_k:k\in\bZ_{>0}}$.
\item\label{invlem3} The periodic orbit $Vx_0$ and the measure $\mb{m}_{Vx_0}$. 
\item\label{invlem4} The periodic orbit $X_m$ and the measures $\mb{m}_{X_m}$,$\set{\bm{\nu}_k:k\in\bZ_{>0}}$ .
\item\label{invlem5} The periodic orbit $Ux_0\times Hx_0$ and the measures $\mb{m}_{Ux_0}\times \mb{m}_{Hx_0}$, 
$\set{\wt{a}(k)_*\wt{\bm{\mu}}_k:k\in\bZ_{>0}}$.
\item\label{invlem6} The periodic orbit $Ux_0\times Vx_0$ and the measure $\mb{m}_{Ux_0}\times \mb{m}_{Vx_0}$.
\end{enumerate}
\end{lemma} 
\begin{proof}

The $\Lam$-action on $Ux_0$, $Hx_0$ and $X_m$ are given by  
\begin{equation}\label{action1}
\smallmat{\del_1&0\\ 0&\del_2}\smallmat{I&0\\ \bu&I}x_0=\smallmat{I&0\\ \del_2\bu\del_1^{-1} &I}x_0.
\end{equation}
\begin{equation}\label{action2}
\smallmat{\del_1&0\\ 0&\del_2}\smallmat{h&\mb{v}\\ 0&I}x_0 = \smallmat{\del_1h &\del_1 \mb{v}\del_2^{-1}\\ 0 & I}x_0.
\end{equation}
\begin{equation*}%\label{action3}
\smallmat{\del_1&0\\0&\del_2}\smallmat{h&0\\0&I}x_0=\smallmat{\del_1h&0\\0&I}x_0. 
\end{equation*}
The $\Lam$-action on $Vx_0$ is given by a similar formula. This shows that the corresponding periodic orbits are 
indeed $\Lam$-invariant. Also, as conjugation by $\Lam$ fixes the volume form on the groups giving rise to these
periodic orbits, the Haar measures on these periodic orbits are preserved. 

The measures $\wt{\bm{\mu}}_k$ are $\Lam_\Del$-invariant 
because it follows from~\eqref{action1} and Definition~\ref{def primitive} that $\wt{\cP}_k$ is $\Lam_\Del$-invariant. In turn, because the 
$\Lam_\Del$-action on $X_d\times X_d$ commutes with that of $\wt{a}(k)$, we conclude that 
$\wt{a}(k)_*\wt{\bm{\mu}}_k$ is $\Lam_\Del$-invariant. Similarly $a(k)_*\bm{\mu}_k$ is $\Lam$-invariant. 
The invariance of the measure $\bm{\nu}_k$ now follows from that of $\bm{\mu}_k$ as the projection 
$\pi_3$ in~\eqref{diagram 1} intertwines the $\Lam$-actions on $Hx_0$ and
$X_m$. 
\end{proof}
\subsection{Rationality issues}\label{ssecri}
We will need the following lemmas in order to establish various rationality statements when classifying measures. These 
rationality statements are important to us because they imply the countability of the measures we classify in each discussion. This countability is used later along the lines described in~\S\ref{sec mop}.
\begin{lemma}\label{rationalitylemma0}
Let $N$ be an integer and let $\lam\in\SL_N(\bZ)$ be matrix acting naturally on the torus $\bT^N=\bR^N/\bZ^N$. Assume
that all the eigenvalues of $\lam$ are not roots of unity. Then, if $\mb{w}\in \bT^N$ has a finite $\lam$-orbit then $\mb{w}$ is
a rational point (that is, any vector representing it is rational).
\end{lemma}
\begin{proof}
Assume that $\lam^j\bw=\bw$. If $w\in\bR^N$ projects to $\mb{w}$ then this means that there is an integer vector $e$ such that
$\lam^j w= w+e$ or said otherwise $(\lam^j-I)w=e$. By assumption $\lam^j-I$ is invertible and its inverse is a rational matrix so
$w=(\lam^j-I)^{-1}e$ is rational as well.
\end{proof}
We recall the definition of the commensurator group: Let $G'$ be a topological group and $\Ga'<G'$ a closed subgroup. 
Let  $\on{comm}_{G'}(\Ga')\defi \set{g\in G':g\Ga' g^{-1}, \Ga'\textrm{ are commensurable}}.$
\begin{lemma}\label{comlem}
Let $G'$ be a topological group and $\Ga',\Lam'$ closed subgroups. 
\begin{enumerate}
\item\label{comlem1} Let $g\in G'$ and $q\in\on{comm}_{G'}(\Ga')$. Then, if the orbit $\Lam g\Ga'\subset G'/\Ga'$ is finite, then so is $\Lam gq\Ga'$.
\item\label{comlem2} If $\Ga'<G'$ is a lattice then $q\in\on{comm}_{G'}(\Ga')$ if and only the orbit of $\Ga'g\Ga'\subset G'/\Ga'$ is finite.
\end{enumerate}
\end{lemma}
\begin{proof} 
\eqref{comlem1}. Note that $\Lam' g\Ga'\subset G'/\Ga'$ is finite if and only if $\on{Stab}_{\Lam'}g\Ga'=\Lam'\cap g\Ga' g^{-1}<\Lam'$ is of finite index. As
$q\in\on{comm}_{G'}(\Ga')$ we deduce that the intersection $gq\Ga'q^{-1}g^{-1}\cap g\Ga' g^{-1}$ is of finite index in both groups. In particular, 
$\Lam'\cap gq\Ga'q^{-1}g^{-1}<\Lam'$ is of finite index. 
Arguing in reverse this implies  now that $\Lam'gq\Ga'\subset G'/\Ga'$ is finite.

\eqref{comlem2}. One direction of implication follows by applying part~\eqref{comlem1} to $\Lam'=\Ga'$. In the other direction, if the orbit $\Ga'q\Ga'$ is finite
then as before, this implies that the group $\on{Stab}_{\Ga'}q\Ga'=\Ga'\cap q\Ga' q^{-1}$ is of finite index in $\Ga'$. In particular, it is a lattice in $G'$, which
forces its index in $q\Ga' q^{-1}$ to be finite as well; i.e.\ $q\in\on{comm}_{G'}(\Ga')$.
\end{proof}
\begin{lemma}\label{rationalitylemma}
If $\smallmat{g&\mb{v}\\0&I}x_0$ has a finite $\Lam$-orbit then
\[
 g\in\on{comm}_{\SL_m(\bR)}(\SL_m(\bZ))\mbox{ and }\mb{v}\mbox{ is rational}.
\]
A similar statement holds for 
$\smallmat{g&0\\\mb{u}&I}x_0$.
\end{lemma}
\begin{proof}
We prove the first statement. The second statement follows by applying the involution on 
$X_d$ induced by the transpose inverse operation.  
Assume that $\Lam \smallmat{g&\mb{v}\\0&I}x_0$ is finite. Projecting to $X_m\cong \SL_m(\bR)/\SL_m(\bZ)$ we deduce by Lemma~\ref{comlem}\eqref{comlem2}
that 
\[
  g\in\on{Comm}_{\SL_m(\bR)}(\SL_m(\bZ)).
\]
 In turn, this implies that $q=\smallmat{g&0\\ 0&I}\in\on{comm}_{G}(\SL_d(\bZ))$. It now 
follows from Lemma~\ref{comlem}\eqref{comlem1} that $\Lam\smallmat{g&\mb{v}\\0&I}q^{-1}x_0=\Lam\smallmat{I&\mb{v}\\0&I}x_0$ is finite as well. Thus we have reduced the 
discussion to the situation where $g=I$. 

We now identify $Vx_0$ with the torus $\bR^N/\bZ^N$ with $N=m\cdot n$ and apply Lemma~\ref{rationalitylemma0} to conclude that $\mb{v}$ is rational. Indeed it is straightforward to verify the existence of $\lam\in \Lam$ that acts on $\bR^N$ without roots of unity as eigenvalues.
\begin{comment}
%commented old rationality proof
Suppose that the entry $\mb{v}_{ij}$ is irrational. Let $\lam=\smallmat{I&0\\0&D^{-1}}\in\Lam$. We have that
$\lam \smallmat{I&\mb{v}\\0&I}x_0=\smallmat{I&\mb{v}D\\0&I}x_0$.  On choosing $D$ appropriately we can add the $j$'th column 
of $\mb{v}$ to another column $\ell$ times so that the
two matrices $\mb{v}$,
$\mb{v}D$ would differ in a certain entry by $\ell \mb{v}_{ij}$ where $\ell $ is an arbitrary integer. The irrationality assumption now contradicts the finiteness of the orbit.
\end{comment}
\end{proof}

\subsection{Measure classifications in general}
\begin{theorem}\label{NimishRatner1}
Let $\mu$ be a $\Lam$-invariant and ergodic probability measure on $X_d$. Then there exists an intermediate subgroup $\Lam<L<G$ and a periodic $L$-orbit $Lx\subset X_d$, such that $\mu$ is the $L$-invariant probability measure $\mb{m}_{Lx}$.
\end{theorem}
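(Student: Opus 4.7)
The plan is to deduce the theorem from Ratner's measure classification theorem, leveraging the fact that $\Lam=\SL_m(\bZ)\times\SL_n(\bZ)$ (with $m\ge n$ and $(m,n)\ne(1,1)$) is generated by finitely many $\on{Ad}$-unipotent elements --- explicit generators being the elementary matrices $I+E_{ij}$ ($i\ne j$) drawn from each $\SL$-factor.

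First I would upgrade $\Lam$-invariance of $\mu$ to invariance under a connected Lie subgroup. For each $\on{Ad}$-unipotent generator $u\in\Lam$, let $U_u\defi\set{\exp(t\log u):t\in\bR}$ denote the one-parameter subgroup through $u$. Taking the $u$-ergodic decomposition of $\mu$ and applying Ratner's measure classification to each component --- each component is necessarily algebraic and hence $U_u$-invariant --- shows that $\mu$ itself is $U_u$-invariant. Running this argument over all unipotent generators renders $\mu$ invariant under the connected Lie subgroup $\wt\Lam<G$ generated by these one-parameter subgroups; in our setting $\wt\Lam=\SL_m(\bR)\times\SL_n(\bR)$, block-diagonally embedded in $G$.

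Second, since $\Lam\subset\wt\Lam$, every $\wt\Lam$-invariant measurable set is automatically $\Lam$-invariant, and so the assumed $\Lam$-ergodicity of $\mu$ immediately upgrades to $\wt\Lam$-ergodicity. Applying Ratner's measure classification directly to the $\wt\Lam$-invariant and $\wt\Lam$-ergodic measure $\mu$ now yields the desired algebraic description: $\mu=m_{Lx}$ for some closed subgroup $L\supseteq\wt\Lam\supseteq\Lam$ and some $x\in X_d$ with $Lx$ a periodic $L$-orbit.

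The main obstacle is the discrete-to-continuous step in the first paragraph --- passing from invariance under a $\bZ$-action generated by an $\on{Ad}$-unipotent element to invariance under the full ambient one-parameter subgroup. This is the $\bZ$-action form of Ratner's theorem and must be cited with care; once continuous unipotent invariance is established, the remainder is a standard application of the continuous-time Ratner machinery together with the ergodic-decomposition argument above.
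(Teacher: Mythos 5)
Your first step contains a genuine error, and it is fatal to the argument. Ratner's theorem for the $\bZ$-action generated by a single $\on{Ad}$-unipotent element $u$ does say that each $u$-ergodic component is homogeneous, i.e.\ of the form $m_{Lx}$ for a closed subgroup $L\ni u$ with $Lx$ periodic; but it does \emph{not} say that $L$ contains the one-parameter subgroup $U_u$ through $u$. The group $L$ may be discrete: for instance $\delta_{x_0}$ is invariant and ergodic under $\langle u\rangle$ for every unipotent $u\in\Ga$, yet it is not $U_u$-invariant. So the inference ``each component is algebraic and hence $U_u$-invariant'' is false, and with it the conclusion that $\mu$ is invariant under $\wt\Lam=\SL_m(\bR)\times\SL_n(\bR)$. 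In fact that conclusion is contradicted by measures the theorem must accommodate and which the paper then analyzes at length: $\delta_{x_0}$, the normalized counting measures on finite $\Lam$-orbits (Lemma~\ref{s.classification}, type~\eqref{c.g.1} measures in Corollary~\ref{summarizing}), and even $m_{Ux_0}$ itself --- the group $\wt\Lam$ does not preserve the torus $Ux_0$, so $m_{Ux_0}$ is $\Lam$-invariant and ergodic but not $\wt\Lam$-invariant. If your step were correct, the entire non-accumulation machinery of Section~\ref{n.a} would be unnecessary; its whole purpose is to exclude exactly these non-$\wt\Lam$-invariant ergodic measures as possible limit components.

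The correct route, and the one the paper takes, is not to upgrade the invariance of $\mu$ itself but to quote a classification valid for discrete groups generated by unipotent elements: either Shah's generalization of Ratner's theorem, which applies directly to $\Lam$, or the suspension (induction) technique of Witte: since $\Lam$ is a lattice in the semisimple group $\wt\Lam$ with no compact factors, one induces the $\Lam$-action on $X_d$ to an $\wt\Lam$-action on a suspension space, applies Ratner's measure classification for connected semisimple groups there, and projects back. That passage from discrete to continuous time happens on the suspension, where the extra invariance is built in by construction, rather than being (incorrectly) forced on $\mu$ on $X_d$.
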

This theorem is a particular case of a more general measure classification by Shah~\cite{ShahgeneralizedRatner} 
which  uses and generalizes Ratner's measure classification 
theorem \cite{R-annals}, \cite{R-Duke}.
It is applicable since 
 $\Lam=\SL_m(\bZ)\times \SL_n(\bZ)$ is generated by unipotent elements.
 In fact, because $\Lam$ is a lattice in  a semisimple Lie subgroup of $G$ 
with no compact factors, using the suspension technique~\cite[Corollary 5.8]{WitteMQ} it is straightforward to deduce 
Theorem~\ref{NimishRatner1} directly from Ratner's measure classification theorem for the actions of semisimple groups without compact factors (for a simplified proof for this case see~\cite{ManfredSemisimpleRatner}).
%%%%%%%

For a closed subgroup $L<G$ we denote by $L^\circ$ the 
connected component of the identity of $L$.
We have the following corollary which will be more convenient for us.
\begin{corollary}\label{minor difference}
Let $\mu$ be a $\Lam$-invariant and ergodic probability measure on $X_d$ and let $L, x$ be the group and the point that arise by applying Theorem~\ref{NimishRatner1} so that $\mu = \mb{m}_{Lx}$. 
Then there exist $x_1\dots x_N\in Lx$ such that $Lx=\bigsqcup_{i=1}^N L^\circ x_i$, each orbit $L^\circ x_i$ is periodic, $\Lam$ acts transitively by permuting the collection of orbits $\set{L^\circ x_i}$, and $\mu=\frac{1}{N}\sum_1^N \mb{m}_{L^\circ x_i}.$
\end{corollary}
\begin{proof}
As $L^\circ\unlhd L$ is open and closed, the orbit $Lx$ decomposes
into a union of (relatively) open and closed $L^\circ$-orbits. As $\mu$ is finite, this decomposition is finite $Lx=\bigsqcup_{i=1}^N L^\circ x_i$. As $L^\circ$ is a normal subgroup of $L$, $\Lam$ acts on the $L^\circ$-orbits by permuting them. Moreover, the
ergodicity assumption implies that this action is transitive. It follows that $\mu(L^\circ x_i)=N^{-1}$ and the formula 
$\mu=\frac{1}{N}\sum_1^N \mb{m}_{L^\circ x_i}$ follows.
\end{proof}

For convenience of reference we also state the following elementary lemma whose proof we omit.  
\begin{lemma}\label{groups and orbits1}
Let $F_1,F_2$ be closed subgroups of $G$ and $x\in X_d$. If $F_1x\subset F_2x$ then $F^\circ_1\subset F_2$.
\end{lemma}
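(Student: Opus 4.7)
The plan is to pass from the orbit inclusion in $X_d$ to a subset inclusion in $G$, and then exploit a Baire category argument that relies on the discreteness (hence countability) of the stabilizer $\mathrm{Stab}_G(x)$. Write $x=g_0\Ga$ and set $\Ga_0\defi g_0\Ga g_0^{-1}=\mathrm{Stab}_G(x)$, which is discrete. Then $F_1x\subset F_2x$ is equivalent to the statement $F_1\subset F_2\Ga_0$ in $G$. Since $\Ga_0$ is countable we may decompose
\[
F_1\;=\;\bigcup_{\ga\in\Ga_0}(F_1\cap F_2\ga),
\]
as a countable union of closed subsets of $F_1$ (each $F_2\ga$ is a right translate of the closed Lie subgroup $F_2$ and hence is closed in $G$).

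Next I would invoke the Baire property of $F_1$ (which is a locally compact, second countable Hausdorff space, being a closed Lie subgroup of $G$) to find some $\ga_0\in\Ga_0$ for which $F_1\cap F_2\ga_0$ has non-empty interior in $F_1$. Pick a point $f_0=h_0\ga_0$ in this interior, with $h_0\in F_2$. Since right multiplication by $f_0^{-1}\in F_1$ is a self-homeomorphism of $F_1$, the translated set $(F_1\cap F_2\ga_0)f_0^{-1}$ is an open neighbourhood $V$ of the identity in $F_1$. A direct computation yields $\ga_0 f_0^{-1}=h_0^{-1}$, so $F_2\ga_0 f_0^{-1}=F_2 h_0^{-1}=F_2$, and thus $V\subset F_2$.

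To conclude, I would use connectedness. Because $F_1$ is a Lie group, $F_1^\circ$ is open in $F_1$, so $V\cap F_1^\circ$ is an open neighbourhood of the identity in the connected topological group $F_1^\circ$. Any such neighbourhood generates $F_1^\circ$ by finite products, and since $V\cap F_1^\circ\subset F_2$ and $F_2$ is a subgroup of $G$, this forces $F_1^\circ\subset F_2$, as claimed.

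The only potential obstacle is the Baire category step, but it is justified by the two standing features of our setting: $\Ga_0$ is a discrete (hence countable) subgroup of $G$, and $F_2$ is closed, so each translate $F_2\ga$ is closed. Everything else is routine.
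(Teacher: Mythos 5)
Your argument is correct and complete. The paper itself omits the proof of this lemma (it is stated only "for convenience of reference"), so there is nothing to compare against; your route — translating $F_1x\subset F_2x$ into $F_1\subset F_2\Ga_0$ with $\Ga_0=g_0\Ga g_0^{-1}$ countable, decomposing $F_1=\bigcup_{\ga\in\Ga_0}(F_1\cap F_2\ga)$ into countably many closed pieces, applying Baire category in the locally compact group $F_1$, translating a piece with non-empty interior back to the identity, and then letting the resulting identity neighbourhood generate $F_1^\circ\subset F_2$ — is exactly the standard way to make "elementary" precise here, and every step checks out ($\ga_0f_0^{-1}=h_0^{-1}$ so $F_2\ga_0f_0^{-1}=F_2$; the generated subgroup of $V\cap F_1^\circ$ is all of $F_1^\circ$ and sits inside the subgroup $F_2$). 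One cosmetic point: $(F_1\cap F_2\ga_0)f_0^{-1}$ is a neighbourhood of $e$ in $F_1$ but need not itself be open, so take $V$ to be the translate of the interior of $F_1\cap F_2\ga_0$; this changes nothing in the argument.
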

The rest of this section is devoted to classifying $\Lam$-invariant and ergodic measures in various situations that will be encountered 
in the course of the proof of Theorem~\ref{JED}.
\subsection{Measures supported in $Ux_0$}

Let us define
\begin{equation}\label{Tor}
\on{Tor}_k(Ux_0)\defi\set{x_{k^{-1}\bu}\in Ux_0: \bu\in\Mat_{n\times m}(\bZ)}.
\end{equation}
\begin{lemma}\label{s.classification}
The ergodic $\Lam$-invariant probability measures on $Ux_0$ are exactly the normalized counting measures on finite $\Lam$-orbits and $\mb{m}_{Ux_0}.$ Moreover, any finite $\Lam$-orbit is contained in $\on{Tor}_k(Ux_0)$ for some positive integer $k$.
\end{lemma}

\begin{proof}
Let\footnote{Lemma~\ref{s.classification} could be proved using Fourier arguments but we choose to appeal to  
Theorem~\ref{NimishRatner1} as this is more compatible with the later arguments. Furthermore, this is by no means a new result and the 
proof is included for expository reasons. For a similar argument see for example~\cite[Example 5.9]{WitteMQ}.} $\mu$ be an ergodic $\Lam$-invariant measure supported in the orbit $Ux_0$. Applying Theorem~\ref{NimishRatner1} we
conclude the existence of a closed subgroup $\Lam<L<G$ such that $\mu$ is the $L$-invariant probability measure supported on a periodic $L$-orbit. 
Applying  Lemma~\ref{groups and orbits1} we conclude that $L^\circ< U$. Viewing $L^\circ$ as a subspace of
$\on{Mat}_{n\times m}(\bR)\cong U$, the fact that $\Lam$ normalizes $L^\circ$ translates into this subspace being invariant
under the linear representation of $\Lam$ on $\on{Mat}_{n\times m}(\bR)$ 
(here $\smallmat{A&0\\0&D}\in\SL_m(\bR)\times\SL_n(\bR)$
acts on $\mb{u}\in \on{Mat}_{n\times m}(\bR)$ by $D\mb{u}A^{-1}$). As this representation of $\SL_m(\bR)\times\SL_n(\bR)$ is
irreducible and $\Lam$ is Zariski dense in the former group, we deduce that $L^\circ$ is either the trivial group or $U$. 

If $L^\circ= U$ then clearly $\mu=\mb{m}_{Ux_0}$. If $L^\circ$ is trivial, an application
of Corollary~\ref{minor difference} gives that 
$\mu$ is the normalized counting measure on a finite $\Lam$-orbit $\Lam x_\mb{u}\subset Ux_0$. Lemma~\ref{rationalitylemma} implies now that
 $\mb{u}$ is in fact a rational matrix. If $k$ is a common denominator for its entries then $x_{\mb{u}}\in\on{Tor}_k(Ux_0)$ 
as desired.
\end{proof}

\subsection{Measures supported in $Hx_0$}\label{msiHx}

 %%%%%%%%%%%%%%%%%%%%%%%%%%
 Our next objective is to classify the $\Lam$-invariant ergodic probability measures
supported in $Hx_{0}$.  By Theorem~\ref{NimishRatner1} and 
Corollary~\ref{minor difference} such a measure is always of the form 
$\frac{1}{N}\sum \mb{m}_{L x_i}$, where  $L<G$ is a closed connected subgroup normalized by $\Lam$.
As $Lx_i\subset Hx_0$ and $L$ is connected we conclude from Lemma~\ref{groups and orbits1} that $L<H$.
We have the following classification of groups. 
\begin{lemma}\label{classifying groups}
Let $L<H$ be a closed connected subgroup normalized by $\Lam$. Then there are four possibilities
\begin{enumerate}
\item\label{c.g.1} $L=\set{e}$,
\item\label{c.g.2} $L=V=\set{\smallmat{I&\mb{v}\\0&I}:\mb{v}\in\Mat_{m\times n}(\bR)},$
\item\label{c.g.3} $L=\SL_m(\bR)=\set{\smallmat{h&0\\0&I}:h\in\SL_m(\bR)},$
\item\label{c.g.4} $L=H$.
\end{enumerate}
\end{lemma}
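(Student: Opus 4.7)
The plan is to pass from the connected group $L$ to its Lie algebra $\gol\subset\lie(H)$ (which is $\on{Ad}(\Lam)$-invariant because $\Lam$ normalizes $L$), use Borel density to upgrade this to invariance under $\SL_m(\bR)\times\SL_n(\bR)$, and then carry out a short representation-theoretic case analysis of the invariant subspaces.

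First I would write $\lie(H)=\mathfrak{sl}_m(\bR)\oplus V$, where the unipotent piece is identified with its Lie algebra $\Mat_{m\times n}(\bR)$ in the natural way. A direct computation in block form shows that $(\del_1,\del_2)\in\Lam$ acts via the adjoint representation on $(X,Y)\in\mathfrak{sl}_m(\bR)\oplus V$ by $(X,Y)\mapsto(\del_1 X\del_1^{-1},\,\del_1 Y\del_2^{-1})$, so both summands are $\Lam$-stable. Borel density, applied to $\Lam$ inside $\SL_m(\bR)\times\SL_n(\bR)$, upgrades $\gol$ to a subspace invariant under the full real points. Since $m\ge 2$ (forced by $m\ge n$ together with the standing assumption $(m,n)\ne(1,1)$), the adjoint representation on $\mathfrak{sl}_m(\bR)$ is irreducible; and the representation on $\Mat_{m\times n}(\bR)$, being the outer tensor product of the standard representation of $\SL_m(\bR)$ with the dual standard representation of $\SL_n(\bR)$ (or simply the standard representation of $\SL_m(\bR)$ when $n=1$), is irreducible as well.

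Next I would consider the intersections $A\defi\gol\cap\mathfrak{sl}_m(\bR)$ and $B\defi\gol\cap V$, together with the projections $\pi_1,\pi_2$ of $\gol$ onto the two summands. Each of $A,B,\pi_1(\gol),\pi_2(\gol)$ is $\Lam$-invariant, hence by irreducibility is either zero or the whole factor. Every combination in which $A$ or $B$ equals a full factor pins $\gol$ down immediately to one of $\set{0,\,V,\,\mathfrak{sl}_m(\bR),\,\lie(H)}$, matching exactly cases~\eqref{c.g.1}--\eqref{c.g.4}.

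The remaining, and I expect the only delicate, case is $A=B=0$ with both projections $\pi_1,\pi_2$ surjective, so that $\gol$ is the graph of a $\Lam$-equivariant linear isomorphism $\phi:\mathfrak{sl}_m(\bR)\to V$. The hard part will be ruling this out, and I would do so by Schur-type reasoning: for $n\ge 2$ the factor $\SL_n(\bR)$ acts trivially on the source of $\phi$ but nontrivially on its target, forcing $\phi=0$; for $n=1$ the source and target are irreducible $\SL_m(\bR)$-modules of different dimensions $m^2-1\ne m$, hence non-isomorphic, and Schur's lemma again gives $\phi=0$. Thus the graph case is vacuous. Finally, each of the four candidate Lie algebras is visibly the Lie algebra of the closed connected subgroup listed in~\eqref{c.g.1}--\eqref{c.g.4}, which completes the classification.
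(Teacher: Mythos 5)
Your proposal is correct. It differs from the paper's argument mainly in where the work is done: the paper argues at the group level, projecting $L$ to the quotient $H/V\cong\SL_m(\bR)$, using Zariski density of $\SL_m(\bZ)$ to see that this projection is a connected normal subgroup (hence trivial or everything by simplicity), then intersecting with $V$ and invoking irreducibility of the $\Lam$-action on $\Mat_{m\times n}(\bR)$; the one case where the projection is onto and $L\cap V$ is trivial is dismissed there as a ``short exercise'' showing $L$ must be the standard copy of $\SL_m(\bR)$. You instead pass to $\lie(L)\subset\lie(H)=\mathfrak{sl}_m(\bR)\oplus\Mat_{m\times n}(\bR)$, upgrade $\Lam$-invariance to invariance under $\SL_m(\bR)\times\SL_n(\bR)$ by Borel density, and classify invariant subspaces of a sum of two non-isomorphic irreducibles; the graph case, excluded by Schur's lemma (no nonzero equivariant map since $\SL_n(\bR)$ acts trivially on $\mathfrak{sl}_m(\bR)$ but has no nonzero fixed vectors in $\Mat_{m\times n}(\bR)$ for $n\ge2$, and the dimensions $m^2-1\ne m$ differ when $n=1$), is exactly the infinitesimal version of that exercise, i.e.\ it rules out twisted Levi complements such as $\set{(X,-XY_0):X\in\mathfrak{sl}_m(\bR)}$. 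So your route makes the paper's omitted step explicit and purely module-theoretic, at the cost of the (routine) translation between connected closed subgroups and their Lie algebras; the paper's route stays global and leans on simplicity of $\SL_m(\bR)$ instead of the full invariant-subspace analysis. Both hinge on the same two inputs: Zariski density of $\Lam$ and irreducibility of the $\Lam$-representation on $\Mat_{m\times n}(\bR)$.
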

\begin{proof}
Consider the projection of $L$ in the simple group $\SL_m(\bR)=H/V$. As $L$ is normalized by $\Lam$, this projection is a connected subgroup normalized by $\SL_m(\bZ)$ and by Zariski density we deduce that it is a connected normal subgroup. It therefore follows that this projection is
either trivial or $\SL_m(\bR)$. In the first case, $L<V$. As the adjoint action of $\Lam$ on the Lie algebra of $V$ (which is isomorphic to $V$ itself) is irreducible, it follows that either $L=V$ or $L$ is the trivial group. 

Assume then that the projection of $L$ is onto and consider the subgroup $V'=L\cap V$
which is normalized by $\Lam$. The same irreducibility argument as before implies that either $V'=V$ or $V'$ is trivial.
In the first case $L=H$. In the second case 
the projection of $L$ onto $H/V$ must also be injective so 
there exists a map $\psi:\SL_m(\bR)\to\Mat_{m\times n}(\bR)$ such that $L=\set{\smallmat{g&\psi(g)\\ 0&I}:g\in\SL_m(\bR)}$. 
If $\lam=\smallmat{\del&0\\0&I}\in \Lam$ then $\lam\smallmat{\del&\psi(\del)\\ 0&I}\lam^{-1}=\smallmat{\del&\del\psi(\del)\\ 0&I}\in L$ so that $\del\psi(\del)=\psi(\del)$. By Zariski density we deduce that this formula holds for any $\del\in \SL_m(\bR)$. In turn, if $1$
is not an eigenvalue of $\del$, this implies that the columns of $\psi(\del)$ must be zero. Continuity now gives that $\psi$ vanishes and so $L$ is the standard copy 
of $\SL_m(\bR)$ in $H$.
\end{proof}
In light of  Lemma~\ref{classifying groups} it makes sense to make (for the sake of the current discussion), the following
\begin{definition}\label{t.o.m}
We say that a $\Lam$-invariant  and ergodic measure on $Hx_{0}$ is of \textit{type} (1)--(4) according to which one of the four groups that appear in Lemma~\ref{classifying groups}
is attached to it.
\end{definition}
\begin{corollary}\label{summarizing}
The following is a classification of the $\Lam$-invariant and ergodic probability measures on $Hx_{0}$.
\begin{enumerate}
\item Measures of type~\eqref{c.g.1} are simply normalized counting measures on a (necessarily finite) $\Lam$-orbit of a point of the form $\smallmat{h&\mb{v}\\ 0&I}x_0$, where 
$h\in\on{comm}_{\SL_m(\bR)}(\SL_m(\bZ))$ and $\mb{v}$ is rational. If such a measure projects to $\del_{x_0}$ under $(\pi_3)_*$ then we can choose $h=I$.
\item\label{sum2} Measures of type~\eqref{c.g.2} are of the form $\frac{1}{N}\sum_{i=1}^N \mb{m}_{Vx_i}$, where $\set{x_i}$ is the $\Lam$-orbit of $x_1 =\smallmat{h&0\\0&I}x_0$, where $h\in\on{comm}_{\SL_m(\bR)}(\SL_m(\bZ))$. There is only one such measure that projects to $\del_{x_0}$ and that is $\mb{m}_{Vx_0}$.
\item Measures of type~\eqref{c.g.3} are of the form $\frac{1}{N}\sum_1^N\mb{m}_ {\SL_m(\bR)x_i}$, where $x_i=\smallmat{I&\mb{v}_i\\ 0& I}x_0$ 
and the $\mb{v}_i$'s are
rational. All such measures project under $(\pi_3)_*$ to $\mb{m}_{X_m}$.
\item There is only one measure of type~\eqref{c.g.4}, namely $\mb{m}_{Hx_{0}}$ and it projects under $(\pi_3)_*$ to $\mb{m}_{X_m}$.
\end{enumerate}
\end{corollary}
%starting an attempt for a proof
\begin{proof}
Let $\mu$ denote a $\Lam$-invariant and ergodic probability measure on $Hx_0$ and present it 
as $\mu=\frac{1}{N}\sum_1^N\mb{m}_{Lx_i}$ with $L$ connected and determining the type as explained above. We prove statements (1)--(3) as (4) is clear.

\noindent (1). If $\mu$ is of type~\eqref{c.g.1} we deduce from the ergodicity that $\set{x_i}_1^N$ forms a finite $\Lam$-orbit 
and Lemma~\ref{rationalitylemma} gives the commensurability and rationality statements. The statement regarding the projection is clear.

\noindent (2). Suppose $\mu$ is of type~\eqref{c.g.2} and write $x_i=\smallmat{h_i&\mb{v}_i\\0&I}x_0$. Using the $V$-invariance we 
may assume without loss of generality that $\mb{v}_i=0$ and so $\pi_3(x_i)=x_i$. Then 
$(\pi_3)_*\mu=\frac{1}{N}\sum_1^N\del_{x_i}$ is a finitely
supported $\Lam$-invariant and ergodic measure and so the $x_i$'s form an orbit. Lemma~\ref{rationalitylemma} gives now that $h_1\in\on{comm}_{\SL_m(\bR)}(\SL_m(\bZ))$ as required.

\noindent (3).  Assume $\mu$ is of type~\eqref{c.g.3}. As the periodic orbits $\SL_m(\bR)x_i$ are closed and the $\SL_m(\bR)$ orbits 
are transverse to the $V$-orbits (all of which are periodic in $Hx_0$), we deduce that  the intersection 
$\on{supp}(\mu)\cap Vx_0$, which is $\Lam$-invariant and closed, is finite. Lemma~\ref{rationalitylemma} now implies that 
the points $x_i$ which without loss of generality might be assumed to belong to $\on{supp}(\mu)\cap Vx_0$, have rational representatives.
\end{proof}
\subsection{Measures supported in $Ux_0\times Hx_0$}
\begin{theorem}\label{mcn<m}
The only $\Lam_\Del$-invariant probability measure on $Ux_0\times Hx_0$ which projects under $\pi_1$ and $\pi_2$ to $\mb{m}_{Ux_0}$, $\mb{m}_{Hx_0}$ respectively, is the product measure  $\mb{m}_{Ux_0}\times \mb{m}_{Hx_0}$.
\end{theorem}
\begin{proof}
Let $u\in \Lam$ be a unipotent element that belongs to the subgroup $\SL_m(\bZ)<\Lam$ (such an element always exists as $m\ge 2$). 
Note the following two facts
\begin{enumerate}
\item  The action of $u$ on $ (Hx_{0},\mb{m}_{Hx_{0}})$ is mixing by the Howe-Moore theorem applied to the action of
$\SL_m(\bR)$.
\item The action of $u$ on the torus $\bT^{n\times m}\cong Ux_0$ is by a unipotent automorphism and so the ergodic components of $\mb{m}_{Ux_0}$ with respect to the action
of $u$ are minimal rotation on compact abelian groups.
\end{enumerate}
The theorem now follows from the following two lemmas. 
\end{proof}
\begin{lemma}[cf.\ {\cite[Theorem 6.27]{GlasnerBook}}]\label{disjoint}
Let $(X,\mu,u),(Y,\nu,u)$ be two dynamical systems. If $(X,\mu,u)$ is 
a minimal rotation on a compact abelian group and $(Y,\nu,u)$ is mixing, then the two systems are disjoint.
\end{lemma}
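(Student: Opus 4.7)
The plan is to verify that an arbitrary $u$-invariant joining $\lambda$ on $X \times Y$ with marginals $\mu$ and $\nu$ must equal the product $\mu \times \nu$. Since $\mu$ is the Haar measure on the compact abelian group $X$, the characters of $X$ form an orthonormal basis of $L^2(X,\mu)$, so by Fourier expansion it suffices to check
\[
\int \chi(x) g(y)\, d\lambda(x,y) = \Bigl(\int \chi\, d\mu\Bigr)\Bigl(\int g\, d\nu\Bigr)
\]
for every character $\chi$ of $X$ and every $g \in L^2(Y,\nu)$. The case $\chi = 1$ follows directly from the marginal condition, so only nontrivial characters require attention, and after subtracting its mean from $g$ the target becomes $\int \chi(x) g(y)\, d\lambda = 0$ under the assumption $\int g\, d\nu = 0$.

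Writing the rotation as translation by some element $a \in X$ and applying $u$-invariance of $\lambda$ iteratively would give
\[
\int \chi(x) g(y)\, d\lambda = \chi(a)^n \int \chi(x) g(u^n y)\, d\lambda \qquad (n \geq 0).
\]
Cesàro-averaging this identity in $n$ and pulling the sum inside the integral reduces the problem to showing $\frac{1}{N}\sum_{n=0}^{N-1} \chi(a)^n\, g\circ u^n \to 0$ in $L^2(Y,\nu)$, since a Cauchy--Schwarz estimate on $L^2(\lambda)$ combined with the marginal relation bounds $\bigl|\int \chi(x) h(y)\, d\lambda\bigr|$ by $\|h\|_{L^2(\nu)}$.

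Two ingredients then combine to give the required convergence. First, minimality of the rotation forces $\chi(a) \neq 1$ for every nontrivial character: the subgroup $\{na : n \in \mathbb{Z}\}$ is dense in $X$, and a character equal to $1$ at $a$ would be trivial on this dense subgroup and hence on $X$. Second, mixing of $(Y,\nu,u)$ implies weak mixing, which is equivalent to the Koopman operator having purely continuous spectrum on the mean-zero subspace of $L^2(Y,\nu)$; in particular no nonzero mean-zero function is an eigenfunction with eigenvalue $\overline{\chi(a)} \neq 1$. The mean ergodic theorem applied to the unitary $g \mapsto \chi(a)\cdot g \circ u$ therefore sends the Cesàro averages to $0$. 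I do not anticipate any genuine obstacle: this is essentially the textbook instance of Furstenberg's disjointness between a discrete-spectrum system and a weakly mixing system, and the main point is simply to verify that minimality, rather than mere ergodicity, is what rules out $\chi(a) = 1$ for nontrivial $\chi$.
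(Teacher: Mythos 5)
Your proof is correct, and it takes a somewhat different route from the paper's. The paper works abstractly in $L^2$ of the joining: it shows the mean-zero subspaces coming from $X$ and from $Y$ are orthogonal, using mixing of $(Y,\nu,u)$ in the form ``$T_u^n v\to 0$ weakly for mean-zero $v$'', the fact that orthogonal projection onto the ($T_u$-invariant) $X$-subspace commutes with $T_u$, and then the discrete spectrum of the rotation (an orthonormal basis of eigenvectors, for which inner products with $T_u^n w$ have constant modulus) to rule out weak convergence to zero there. You instead expand in characters of $X$, twist by $\chi(a)$, and apply the von Neumann mean ergodic theorem to the operator $g\mapsto\chi(a)\,g\circ u$ on $L^2(Y,\nu)$, using weak mixing (absence of nonconstant eigenfunctions) to see that its fixed space is trivial when $\chi(a)\neq 1$, which minimality guarantees for nontrivial $\chi$. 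Your argument is the classical Furstenberg-style proof that Kronecker systems are disjoint from weakly mixing systems, so it proves a slightly stronger statement (weak mixing of $(Y,\nu,u)$ suffices), at the cost of the character expansion and the implicit (but correct, by unique ergodicity) identification of $\mu$ with Haar measure; the paper's argument is shorter given full mixing and never needs the Fourier decomposition. Both are complete proofs of the lemma as stated.
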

\begin{lemma}\label{e.c.r}
Let $(X,\mu,u),(Y,{\nu},u)$ be two dynamical systems such that ${\nu}$ is ergodic and let 
$\mu=\int {\mu}_x d\mu(x),$ be the ergodic decomposition of $\mu$ . Then, if for $\mu$-a.e\ $x$ the systems $(X,{\mu}_x,u)$, $(Y,{\nu},u)$ are disjoint, then $(X,\mu,u)$, $(Y,{\nu},u)$ are disjoint as well.
\end{lemma}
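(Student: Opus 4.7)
The plan is to take an arbitrary joining $\lambda$ of $(X,\mu,u)$ with $(Y,\nu,u)$ and to show that the hypothesis forces $\lambda = \mu \times \nu$. The key tool is not the ergodic decomposition of $\mu$ that is given in the statement, but rather the ergodic decomposition of $\lambda$ itself under the diagonal action of $u$ on $X \times Y$; write this as $\lambda = \int \lambda_\omega \, dP(\omega)$ with each $\lambda_\omega$ ergodic.

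Because the projections $\pi_1 : X \times Y \to X$ and $\pi_2 : X \times Y \to Y$ are factor maps of $u$-actions, each pushforward $(\pi_i)_*\lambda_\omega$ is an ergodic $u$-invariant probability measure, and integrating against $P$ recovers the marginals $\mu$ and $\nu$. The ergodicity of $\nu$ then forces $(\pi_2)_*\lambda_\omega = \nu$ for $P$-a.e.\ $\omega$, by uniqueness of ergodic decomposition (for $\nu$ this decomposition is trivial). On the $X$ side, uniqueness of the ergodic decomposition of $\mu$ supplies a measurable map $\omega \mapsto x(\omega)$ such that $(\pi_1)_*\lambda_\omega = \mu_{x(\omega)}$, and the pushforward of $P$ under this map is absolutely continuous with respect to the parameterization measure $\mu$ attached to the family $\{\mu_x\}$.

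With these identifications in place, the assumption that $(X,\mu_x,u)$ and $(Y,\nu,u)$ are disjoint for $\mu$-almost every $x$ transfers to the statement that each ergodic joining $\lambda_\omega$, having marginals $\mu_{x(\omega)}$ and $\nu$, must equal the product $\mu_{x(\omega)} \times \nu$ for $P$-almost every $\omega$. Integrating back gives, for any measurable $A \subset X$ and $B \subset Y$,
\[\lambda(A \times B) = \int \mu_{x(\omega)}(A)\,\nu(B)\,dP(\omega) = \nu(B)\cdot \mu(A),\]
where the last equality uses that the first marginal of $\lambda$ is $\mu$, i.e.\ $\int \mu_{x(\omega)}(A)\,dP(\omega) = \mu(A)$. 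Hence $\lambda = \mu \times \nu$. The only delicate point is the bookkeeping that allows the ``$\mu$-a.e.'' disjointness assumption to be invoked at $P$-a.e.\ $\omega$, which is handled by the absolute continuity of $x_*P$ with respect to $\mu$; working in the standard Borel setting this is routine but it is the step that requires care.
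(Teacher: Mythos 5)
Your proposal is correct and follows essentially the same route as the paper: decompose the joining into ergodic components, use ergodicity of $\nu$ and uniqueness of the ergodic decomposition of $\mu$ to identify the marginals of almost every component as $\nu$ and some $\mu_x$, apply the componentwise disjointness hypothesis, and integrate back. The only difference is cosmetic -- the paper indexes the ergodic components of the joining by points $(x,y)\in X\times Y$, which makes the identification of the $X$-marginal with $\mu_x$ immediate, whereas you index abstractly and handle the same point via the selection map $\omega\mapsto x(\omega)$ and absolute continuity.
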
 
We give both proofs for the sake of completeness.
\begin{proof}[Proof of Lemma~\ref{disjoint}]
Let $\eta$ be a joining of $\mu,{\nu}$. As $\eta$ is a joining, the projections $\on{p}_X:(X\times Y,\eta)\to (X,\mu),\; \on{p}_Y:(X\times Y,\eta)\to (Y,{\nu})$ induce injections of the $L^2$-spaces; that is, the Hilbert spaces $\cH_X\defi L^2(X,\mu),\cH_Y\defi L^2(Y,{\nu})$ are isometrically embedded in $\cH\defi L^2(X\times Y,\eta)$. We denote by $\cH^0,\cH_X^0,\cH_Y^0$ the subspaces orthogonal to the constant functions in each of the spaces. Showing that $\eta$ is the product measure is the same as showing 
that the two subspaces $\cH_X^0,\cH_Y^0$ of $\cH^0$ are orthogonal. For this, note that the mixing assumption for the $u$-action
on $(Y,{\nu})$ is equivalent to saying that for each $v\in \cH_Y^0$ the sequence $T_u^nv$ converges weakly to 0, where we write $T_u$ for the unitary operator on $\cH$ induced by the $u$-action. If the two subspaces are not orthogonal,
then there exists a vector $0\ne v\in \cH_Y^0$ that has a non-trivial projection  to $\cH_X^0$. Let us denote this projection of $v$ by $w\in\cH_X^0$. We claim that $T_u^n w$ must converge weakly to 0 as well. This simply follows from the fact that the projection commutes with $T_u$ as the subspace we project on is $T_u$-invariant and $T_u$ is unitary. However, as $(X,\mu,u)$ is isomorphic to a minimal rotation on a compact group
there exists an integer sequence~$n_k\to\infty$ such that~$T_u^{n_k}w\to w$ as~$k\to\infty$. This contradiction implies the lemma.
%It follows that we will be done if we show that in $\cH_X^0$ there are no non-zero vectors $w$ with the property that $T_u^n w$ converges weakly to $0$. The latter statement follows from the fact that as we assume that 
%the system $(X,\mu,u)$ is isomorphic to a minimal rotation on a compact group, then $\cH_X^0$ has an orthonormal basis of eigenvectors of $T_u$.  
\end{proof}
\begin{proof}[Proof of Lemma~\ref{e.c.r}]
Let $\eta$ be a joining of $\mu,{\nu}$ and let
\[
 \eta=\int \eta_{(x,y)} d\eta(x,y)
\]
be the ergodic decomposition of $\eta$. Let $\on{p}_X,\on{p}_Y$ denote the projections from $X\times Y$ to $X,Y$ respectively. For $\eta$-a.e\ $(x,y)$ we have that $(\on{p}_Y)_*\eta_{(x,y)}$ is an ergodic $u$-invariant probability measure on $Y$ and 
as $\eta$ projects to ${\nu}$ we deduce that ${\nu}=\int (\on{p}_Y)_*\eta_{(x,y)} d\eta(x,y)$. Since ${\nu}$ is ergodic we conclude that ${\nu}=(\on{p}_Y)_*\eta_{(x,y)}$ for 
$\eta$-almost any $(x,y)$. By a similar reasoning one can argue that for $\eta$-a.e\ $(x,y)$ $(\on{p}_X)_*\eta_{(x,y)}={\mu}_x$. It follows that for $\eta$-almost any $(x,y)$ the ergodic component $\eta_{(x,y)}$ is a joining of ${\mu}_x$ and ${\nu}$ and therefore by our disjointness assumption we have $\eta_{(x,y)}={\mu}_x\times {\nu}$ for $\eta$-almost any pair $(x,y)$.  This implies the lemma.

%Given functions $f:X\to\bC,g:Y\to \bC$ we denote $f\otimes g:X\times Y\to\bC$ the function $f\otimes g(x,y)\defi f(x)g(y)$. Our goal is to show that
%for any $f\in L^2(X,\mu),g\in L^2(Y,{\nu})$ we have that $\int f\otimes g d\eta=\int f d\mu\int gd{\nu}$ (which is the same as saying that $\eta=\mu\times {\nu}$). 
%According to the above discussion we have 
% that 
% \begin{align*}
% \smallint f\otimes g d\eta&=\smallint\pa{\smallint f\otimes g d\eta_{(x,y)}}d\eta(x,y)=\smallint\pa{\smallint f\otimes g d{\mu}_x\times {\nu}}d\eta(x,y)\\
% &=\smallint\pa{ \smallint f d{\mu}_x \smallint g d{\nu}}d\eta(x,y)=\smallint g d{\nu}\smallint\pa{\smallint f d{\mu}_x}d\eta(x,y)\\
% &=\smallint g d{\nu}\smallint\pa{\smallint fd{\mu}_x}d\mu(x)=\smallint gd{\nu}\smallint fd\mu.
% \end{align*}
 \end{proof}

\subsection{Measures supported in $Ux_0\times Vx_0$}
We will use the following notation in the special case where $m=n=2$. 
Let us denote by $\mb{u}\mapsto \mb{u}^\#$ the linear isomorphism of $\on{Mat}_2(\bR)$ given by
$\smallmat{a&b\\c&d}^\#=\smallmat{d&-c\\-b&a}.$
For co-prime integers $p,q$ we denote 
\begin{equation}\label{Wpq}
L_{p,q}\defi\set{\pa{\smallmat{I&0\\p\mb{u}&I},\smallmat{I&q\mb{u}^\#\\0&I}}:\mb{u}\in\on{Mat}_2(\bR)}<U\times V.
\end{equation}
\begin{theorem}\label{mcn=m}
Assume $n=m$. Let $\eta$ be a $\Lam_\Del$-invariant and ergodic probability measure supported on $Ux_0\times Vx_0$ and suppose that $(\pi_1)_*\eta=\mb{m}_{Ux_0}$, 
$(\pi_2)_*\eta=\mb{m}_{Vx_0}$. Then 
$\eta=\mb{m}_{Ux_0}\times \mb{m}_{Vx_0}$,
or $n=2$ and $\eta=\frac{1}{N}\sum_1^N\mb{m}_{L_{p,q}(x_i,y_i)}$, where $p,q$ are co-prime integers
and the points $x_i,y_i$ may be chosen to have rational matrix representatives. 
\end{theorem}
\begin{proof}
%%%%%%%%%
%%%%%%%%
%%%%%%%%%
We abuse notation and write $\Lam$ for $\Lam_\Del$.
We identify 
$Ux_0\times Vx_0$ with the product torus $\bT^{n^2}\times \bT^{n^2}$ and recall that the 
$\Lam=\SL_n(\bZ)\times \SL_n(\bZ)$-action on it is  
induced by the $\Lam$-representation on $\Mat_n(\bR)\times \Mat_n(\bR)$ given by (see~\eqref{action1}, \eqref{action2}),
\begin{equation}\label{r.f}
(\del_1,\del_2)(\bu,\mb{v})=(\del_2\bu \del_1^{-1}, \del_1\mb{v}\del_2^{-1}).
\end{equation}

As $\Lam$ is generated by unipotents, a suitable application of Theorem~\ref{NimishRatner1} implies that there exists a $\Lam$-invariant subspace $W\subset \Mat_n(\bR)\times \Mat_n(\bR)$  and finitely many periodic orbits $\set{W(\bu_i,\mb{v}_i)}_1^N\subset  \bT^{n^2}\times \bT^{n^2}$ such that $\eta= \frac{1}{N}\sum_1^N\mb{m}_{W(\bu_i,\mb{v}_i)}$.
From our assumption that $\eta$ is a joining of $\mb{m}_{Ux_0}$ and $\mb{m}_{Vx_0}$ we conclude that $W$ projects onto $\Mat_n(\bR)$ under
both left and right projections.

We will need the following general representation theoretic lemma. Its proof is straightforward and left to the reader. 
\begin{lemma}\label{r.t.l}
Let $\rho_i$ for $i=1,2$ be two irreducible representations of a group $\Lam$ on the vector spaces $V_i$ and let $\rho=\rho_1\oplus \rho_2$. If $\set{0}\varsubsetneq W\varsubsetneq V_1\oplus V_2$
is a $\Lam$-invariant subspace, then either $W=V_1\times \set{0}$, or $W=\set{0}\times V_2$, or there is an isomorphism $\vphi:(V_1,\rho_1)\to( V_2,\rho_2)$ of $\Lam$-representations such that $W=\set{(v,\vphi(v)):v\in V_1}$.
\end{lemma}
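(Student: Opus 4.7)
The plan is to prove the lemma by a standard Goursat-type argument combining the irreducibility of $\rho_1,\rho_2$ with a simple case analysis on the two coordinate projections.

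First, I would consider the coordinate projections $\pi_i\colon W\to V_i$ obtained by restricting the projections $V_1\oplus V_2\to V_i$. Each image $\pi_i(W)\subset V_i$ is a $\Lam$-invariant subspace, so by irreducibility of $\rho_i$ it is either $\set{0}$ or $V_i$. If $\pi_1(W)=\set{0}$, then $W\subset \set{0}\oplus V_2$; since $W\neq\set{0}$ and $\set{0}\oplus V_2$ is $\Lam$-irreducible, we must have $W=\set{0}\oplus V_2$, and this gives the second alternative of the lemma. Symmetrically, $\pi_2(W)=\set{0}$ leads to $W=V_1\oplus\set{0}$. So assume from now on that both $\pi_1$ and $\pi_2$ are surjective when restricted to $W$.

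Next, I would look at the kernels $K_1\defi\ker(\pi_2|_W)=W\cap(V_1\oplus\set{0})$ and $K_2\defi\ker(\pi_1|_W)=W\cap(\set{0}\oplus V_2)$. Both are $\Lam$-invariant subspaces, and can be identified with $\Lam$-invariant subspaces of $V_1$ and $V_2$ respectively; by irreducibility each is either the whole space or zero. If $K_1=V_1\oplus\set{0}$, then $V_1\oplus\set{0}\subset W$, and combined with the surjectivity of $\pi_2|_W$ this forces $W=V_1\oplus V_2$, contradicting the proper inclusion hypothesis. Hence $K_1=\set{0}$, and similarly $K_2=\set{0}$.

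Both restricted projections $\pi_1|_W, \pi_2|_W$ are therefore linear bijections. Setting $\vphi\defi \pi_2\circ(\pi_1|_W)^{-1}\colon V_1\to V_2$ yields a linear isomorphism whose graph is exactly $W$, and the $\Lam$-invariance of $W$ translates immediately into the $\Lam$-equivariance of $\vphi$. Thus $\vphi$ is an isomorphism of $\Lam$-representations, giving the third alternative. There is essentially no obstacle here beyond bookkeeping; the only point requiring a hint of care is verifying that the graph description is forced by $K_1=K_2=\set{0}$ together with the surjectivity of both projections, which is a one-line check.
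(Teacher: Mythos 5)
Your proof is correct and complete. The paper in fact gives no proof of Lemma~\ref{r.t.l} (it is explicitly ``left to the reader''), and your Goursat-style argument --- using irreducibility to force each coordinate projection of $W$ and each kernel $W\cap(V_1\oplus\set{0})$, $W\cap(\set{0}\oplus V_2)$ to be all or nothing, then realizing $W$ as the graph of the equivariant isomorphism $\vphi=\pi_2\circ(\pi_1|_W)^{-1}$ --- is exactly the standard argument the authors intend; no gaps to report.
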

We apply this Lemma with $V_1=V_2=\Mat_n(\bR)$, with the representations $\rho_i$ of $\Lam$ which are given by restricting to left and right  coordinates of the formula~\eqref{r.f}, and with $W$ being the subspace identified above which is attached to $\eta$. If $W$ is the whole space then $\eta=\mb{m}_{Ux_0}\times \mb{m}_{Vx_0}$. Otherwise, we deduce from the lemma 
(and the fact that $W$ projects onto each factor), the existence of the isomorphism $\vphi$. Then, applying~\eqref{r.f} to the diagonal copy of $\SL_n(\bZ)$ in $\Lam$, we deduce that 
the image of the line $\set{sI:s\in\bR}\subset V_1$ must be stable under conjugation by every element
of $\SL_n(\bZ)$, which implies that this line must be mapped to itself. Therefore, there exists a scalar $\rho_\vphi$ such that
 $\vphi(I)=\rho_\vphi I$. It then follows from~\eqref{r.f} that the restriction of $\vphi$ to the set 
 $\set{s\del: s\in\bR,\del\in\SL_n(\bZ)}$ is given by the formula  
\begin{equation}\label{another formula}
\vphi(s\del)=\rho_\vphi s\del^{-1}.
\end{equation}
 For $n>2$ this is not a linear map and so the existence of $\vphi$ is ruled out and so indeed $\eta=\mb{m}_{Ux_0}\times \mb{m}_{Vx_0}$ as claimed.

In the case $n=2$ on the other hand, formula~\eqref{another formula}
is given by 
\begin{equation}\label{isodim2}
\mb{u}=\smallmat{a&b\\ c& d}\overset{\vphi}{\lra}\rho_\vphi\smallmat{d&-b\\-c&a}=\rho_\vphi \mb{u}^\#,
\end{equation}
which is linear, and by Zariski density, this must be the formula for $\vphi$ on  $V_1=\on{Mat}_2(\bR)$. 
By the above lemma we have that $$W=\set{\pa{\mb{u},\rho_\vphi\mb{u}^\#}:\mb{u}\in\Mat_n(\bR)}$$ and since
it has periodic orbits in $\bT^{n^2}\times \bT^{n^2}$, $\rho_\vphi$ must be rational, say $\rho_\vphi=p/q$ for
some co-prime integers $p,q$. Under our identifications $W$ corresponds to to the subgroup $L_{p,q}$ from~\eqref{Wpq}.

Finally, we need to justify the rationality of the points $(\mb{u}_i,\mb{v}_i)$. Consider the action of $\Lam$ on the quotient torus
$\bT^{n^2}\times \bT^{n^2}/W$ which we identify with the standard torus $\bT^N$ for a suitable $N$.
The measure $\eta$ projects there to a finitely supported $\Lam$-invariant measure. It is straightforward to show the existence of
$\lam\in\Lam$ which acts on $\bR^N\cong \Mat_n\times\Mat_n/W$ without roots of unity as eigenvalues and therefore by Lemma~\ref{rationalitylemma0} we conclude the rationality of the images of $(\mb{u}_i,\mb{v}_i)$ in $\bT^N$ and in turn the desired
rationality before projecting (because $W$ is a rational space).
\end{proof}

\section{Hecke friends}\label{seced}
\subsection{Hecke friends}\label{hf} For any positive integer $\ell$ and an $m$-dimensional lattice $x\in X_m$ we define the set of \textit{$\ell$-Hecke friends} of $x$ to be the collection of 
lattices $$\set{\ell^{-\frac{1}{m}}\Del:\textrm{ $\Del$ is a subgroup  of index $\ell$ of $x$}}\subset X_m.$$ 
Clearly, it is enough to understand the collection of $\ell$-Hecke friends of $\bZ^m$, for if $x=h\bZ^m$ for some $h\in\SL_m(\bR)$, then the collection of $\ell$-Hecke friends of $x$ is simply the image under $h$ of the corresponding collection for $\bZ^m$. 

Given a subgroup $\Del<\bZ^m$ of index $\ell$, we choose $\mb{u}\in\Mat_m(\bZ)$ 
whose columns form a basis for $\Del$ and with $\det\mb{u}>0$. By
Lemma~\ref{r.f.lemma} (and its proof) there is a unique positive elementary divisors tuple 
$(\ell_1\dots\ell_m)$ attached to it. Consequently, $\ell=\prod_1^m\ell_i$. It is straightforward to show that the tuple does not depend on 
the choice of $\mb{u}$ and so we make the following. 
%
\begin{comment}
We wish to define the \textit{types} of the various $\ell$-Hecke friends. Let $\Del<\bZ^m$ be a subgroup of index $\ell$. 
It follows from the elementary divisors theorem that there is a unique $m$-tuple $(\ell_1\dots\ell_m)$ of positive integers such that $\ell_i|\ell_{i+1}$ and for which there exists a basis of $\bZ^m,$ $v_1\dots v_m$ for which $\ell_1v_1\dots\ell_mv_m$ forms a basis of 
$\Del$ (in particular, it follows that $\ell=\prod_1^m\ell_i$). In other words, if $\ga\in\SL_m(\bZ)$ is the matrix whose columns are the $v_i$'s\footnote{To be precise, we might need to replace $v_1$ by $-v_1$ in order to ensure 
that $\det \ga=1$.}, then  $\Del=\ga\diag{\ell_1,\dots,\ell_m}\bZ^m$. 
%It is clear that $L$ is primitive if and only if the first elementary divisor $\ell_1$ is 1.
We summarize this as follows.
\end{comment}
\begin{definition}\label{def ht}
Given an $\ell$-Hecke friend of $\bZ^m$, we define
its \textit{Hecke-type} to be the corresponding $m$-tuple of elementary divisors. 
\end{definition}
The above discussion could be restated as follows.
\begin{lemma}\label{hfdsorbits}
 The collection of $\ell$-Hecke friends of $\bZ^m$ is partitioned into $\SL_m(\bZ)$-orbits  in the following way
$$\set{\ell\textrm{-Hecke friends of }\bZ^m}=\bigsqcup_{(\ell_1\dots\ell_m)}\SL_m(\bZ)\diag{\ell_1,\dots,\ell_m}\bZ^m,$$
where the union is taken over the $\ell$-Hecke-types; that is, all positive tuples $(\ell_1\dots \ell_m)$ such that $\ell_i|\ell_{i+1}$ and $\prod_1^m\ell_i=\ell$.  
\end{lemma}

The following equidistribution result will be needed in the proof of Theorem~\ref{JED}.
\begin{theorem}\label{edhf}
Let $\set{(\ell_{i1}\dots\ell_{im})}_{i=1}^\infty$ be a sequence of types of $\bm{\ell}_i$-Hecke friends of $\bZ^m$, where $\bm{\ell}_i\defi\prod_{j=1}^m\ell_{ij}$, and let
${\nu}_i$ denote the normalized counting measure  supported on the collection of $\bm{\ell}_i$-Hecke friends of $\bZ^m$ of type $(\ell_{i1}\dots\ell_{im})$. Then ${\nu}_i\wstar \mb{m}_{X_m}$ if and only if $\frac{\ell_{im}}{\ell_{i1}}\lra\infty$.
\end{theorem}
\begin{proof}
This theorem follows from~\cite{ClozelOhUlmo}. More conveniently, it is a special case of 
\cite[Theorem 1.2]{EskinOh-Hecke}.
This result states that for $q_i\in\GL_m(\bQ)$, the normalized counting measure on the (finite) orbit 
$\frac{1}{(\det q_i)^{1/m}}\SL_m(\bZ)q\bZ^m$ equidistribute to $\mb{m}_{X_m}$ once $\deg q_i\to\infty$
where $\deg q_i$ is the size of the corresponding orbit. We apply this for
$q_i=\diag{\ell_{i1},\dots,\ell_{im}}$. We need to explain why the condition
$\deg(\diag{\ell_{i1}\dots\ell_{im} })\to\infty$
is equivalent to the requirement $\frac{\ell_{im}}{\ell_{i1}}\lra\infty$. 
%Recall that  
%$$\deg(\diag{\ell_1\dots\ell_m} )\defi\av{\SL_m(\bZ)\diag{\ell_1\dots\ell_m} \bZ^m}.$$ 

Without loss of generality we may assume on scaling by $1/\ell_{i1}$ that $\ell_{i1}=1$. 
If $\ell_{im}$ is bounded along some subsequence
of $i$'s then clearly the degrees do not diverge to $\infty$. 
On the other hand, if $(1,\dots,\ell_m)$ is a Hecke type and $\ell_m>M$, then there exists
some $1\le r<m$ such that $\frac{\ell_{r+1}}{\ell_r}>M^{1/m}$. Then the orbit $\SL_m(\bZ)\diag{1,\dots,\ell_m}\bZ^m$
contains an embedded copy of the orbit $\SL_2(\bZ)\smallmat{1&0\\0&\ell_{r+1}/\ell_r}\bZ^2$ which contains at least $p^{e-1}(p+1)$ 
points where $p^e$ is the largest
prime power dividing $\ell_{r+1}/\ell_r$. As this prime power goes to $\infty$ with $M$, we are done.
\end{proof}

\section{Proof of Theorem~\ref{JED}}\label{proof of jed} 

We follow the scheme presented after ~\eqref{diagram 1}, divide the proof into Steps 1-4.
\subsection{Step 1}\label{step1section} In this step we establish the convergence $\bm{\mu}_k\wstar \mb{m}_{Ux_0}$. Let $\sig$ be a weak$^*$ limit of the sequence $\set{\bm{\mu}_k}$. By Lemma~\ref{invariance under Lambda}\eqref{invlem1} $\sig$ is a $\Lam$-invariant probability measure. 
By Lemma~\ref{s.classification} there are only countably many $\Lam$-invariant ergodic probability measures on 
$Ux_0$.  We let $\sig_0\defi \mb{m}_{Ux_0}$ and let $\set{\sig_i}_{i=1}^\infty$ be any enumeration of the 
measures supported on finite $\Lam$-orbits. 
By the ergodic decomposition we may write $\sig=\sum_{i=0}^\infty c_i\sig_i$ where $c_i\ge 0$ and $\sum_i c_i=1$. 
The proof of Step 1 will be concluded once we show that $c_0=1$. By Lemma~\ref{s.classification} each 
$\sig_i$, $i>0$ is supported in the torsion points $\on{Tor}_{k_i}(Ux_0)$ for some integer $k_i$. It is straightforward to show that for $k>k_i$ we have that 
$\cP_k\cap\on{Tor}_{k_i}(Ux_0)=\varnothing$, and so for all large enough $k$ the support of $\bm{\mu}_k$ is disjoint from the orbit 
on which $\sig_i$ is supported. We now apply the Non-accumulation Theorem~\ref{n.a.t} and deduce that $c_i=0$ for~$i>0$ as desired. The application of Theorem~\ref{n.a.t} 
is done with the following choices: $G=\Lam\ltimes U$, $\Ga=\Lam\ltimes \on{Mat}_{n\times m}(\bZ)$, $L=\Lam$, $z$ being one of the points in the support of $\sig_i$. The space $W$, 
being a linear complement of $\lie(L)=\set{0}$,
equals $\lie(U)= \on{Mat}_{n\times m}(\bR)$ and indeed there are no $\Lam$-fixed vectors in $W$ other than zero.
\subsection{Step 2}
In this step we establish the convergence $\bm{\nu}_k\wstar (\pi_3)_*\bm{\theta}$. This follows from the following lemma.
\begin{lemma}\label{link to Hecke friends}
The measure $\bm{\nu}_k\defi (\pi_3)_*a(k)_*\bm{\mu}_k$ is the normalized counting measure on the collection of 
$k^n$-Hecke friends of  $\bZ^m$ of type $(\underbrace{1\dots 1}_{m-n} \nolinebreak[2] \underbrace{k\dots k}_{n}).$
In particular, if $m>n$, $\bm{\nu}_k$ equidistribute to $\mb{m}_{X_m}$ and if $m=n$ then $\bm{\nu}_k$ is the Dirac measure supported on $x_0$.
\end{lemma}
\begin{proof}
The last sentence in the statement simply follows from Theorem~\ref{edhf} (and the observation that $\bZ^m$ is the only $k^n$-Hecke friend of type $(k\dots k)$ of itself). We are thus left to verify the first sentence of the statement.
%%%

The set $\cP_k$ decomposes into $\Lam$-orbits and each such orbit is mapped by $\pi_3\textrm{{\tiny $\circ$}} a_k$ to a single $
\Lam$-orbit in $X_m$. Let $\Om\subset \cP_k$ be such an orbit. By Lemma~\ref{r.f.lemma} there exists $x_{k^{-1}\mb{u}}\in\Om$ with 
$\mb{u}$ of the form~\eqref{reduced form}. By Lemma~\ref{hfdsorbits}, if we will show that $\pi_3(a_k x_{k^{-1}\mb{u}})$ is a Hecke friend of $\bZ^m$ of
the type prescribed in the statement, it would follow that the normalized counting measure on $\Om$ is pushed by 
$(\pi_3)_*(a_k)_*$ to the normalized 
counting measure on the set of Hecke friends on $\bZ^m$ of this type\footnote{Note that from here
follows a stronger statement than the one sought in Step~2 which is the reason 
for the strengthening of Theorem~\ref{JED} discussed in~\S\ref{c.r}.}. This implies the same statement regarding $\bm{\nu}_k$.

To this end, let $x_{k^{-1}\mb{u}}\in \cP_k$ with $\bu$ $k$-primitive of the form~\eqref{reduced form}. We now calculate
$\pi_3(a(k)x_{k^{-1}\bu})$. Let $\ga=\smallmat{A_\ga& B_\ga\\C_\ga& D_\ga}$ be a matrix solving~\eqref{basic eq} so that
\begin{equation}\label{eq.s.b}
\pi_3(a(k)x_{k^{-1}\bu})=k^{-\frac{n}{m}}A_\ga\bZ^m.
\end{equation} 
It follows from the proofs of Lemmas~\ref{basic lemma}, \ref{lemprimc} that a possible choice for $\ga$ is the inverse of the matrix $\del$ appearing in~\eqref{bigmatrix}. A short computation shows that (using the formula for the inverse of a $2\times 2$ matrix) this inverse is
\begin{equation}\label{big matrix2}
\del^{-1}=
\mat{
\diag{k\dots k}& 0 & -\diag{f_1\dots f_n}\\
0 &I_{n-m}&0\\
-\diag{\ell_1\dots\ell_n}&0&\diag{e_1\dots e_n} 
}.
\end{equation}
In other words $\pi_3(a(k)x_{k^{-1}\bu})$ is the $m$-dimensional unimodular lattice 
$$k^{-\frac{n}{m}}\diag{\underbrace{k,\dots,k}_{n},\underbrace{1,\dots,1}_{m-n}}\bZ^m$$ which is a Hecke-friend of $\bZ^m$
of the desired type.
\end{proof} 
\subsection{Step 3}
%%%%%%%%
In this step we establish the convergence $a(k)_*\bm{\mu}_k\wstar \bm{\theta}$. 
Before turning to the proof we need notation and a lemma.
The orbit $Hx_0$ breaks into periodic $V$-orbits which are the $\pi_3$-fibers. In
order to state the next lemma we need to define the notions of $k$-torsion and $k$-primitive points in such a fiber.
For $x\in Hx_0$, we write $x=\smallmat{g&\mb{v}\\0&I}x_0$ and set
\begin{equation}\label{TorV}
\on{Tor}_k(Vx)\defi\set{\smallmat{g&0\\0&I}\smallmat{I&k^{-1}\mb{v}\\0&I}x_0\in Vx: \mb{v}\in\Mat_{m\times n}(\bZ)}.
\end{equation}
\begin{equation}\label{TorVp}
\on{Tor}_k^{\on{prim}}(Vx)\defi\set{\smallmat{g&0\\0&I}\smallmat{I&k^{-1}\mb{v}\\0&I}x_0\in Vx: \mb{v}^t\textrm{ is $k$-primitive}}.
\end{equation}
Note that although $g$ is not well defined, the coset $g\SL_m(\bZ)$ is well defined and therefore $\on{Tor}_k(Vx), \on{Tor}_k^{\on{prim}}(Vx)$ are well defined
as well.
\begin{lemma}\label{lemtorprim}
For any positive integer $k$, 
\begin{equation}\label{eqinc}
a_k\cP_k \subset \bigcup_{x\in X_m} \on{Tor}_k^{\on{prim}}(\pi_3^{-1}(x)).
\end{equation}
\end{lemma}
\begin{proof}
We first observe that the sets on both sides of~\eqref{eqinc} are $\Lam$-invariant. Regarding $a_k\cP_k$ this follows from
Lemma~\ref{invariance under Lambda}\eqref{invlem2}. Regarding the set on the right, 
given $\lam=\smallmat{\del_1&0\\0&\del_2}\in\Lam$
and a point $\smallmat{g&0\\0&I}\smallmat{I &k^{-1}\mb{v}\\0&I}x_0\in \on{Tor}_k^{\on{prim}}(\pi_3^{-1}(x))$,
where $x=\smallmat{g&0\\0&I}x_0$ and $\mb{v}^t$ $k$-primitive, then
$$\lam \smallmat{g&0\\0&I}\smallmat{I &k^{-1}\mb{v}\\0&I}x_0=\smallmat{\del_1g\del_1^{-1}&0\\0&I}\smallmat{I &k^{-1}\del_1\mb{v}\del_2^{-1}\\0&I}x_0,$$
which belongs to $\on{Tor}_k^{\on{prim}}(\pi_3^{-1}(\lam x))$ because $(\del_1\mb{v}\del_2^{-1})^t$ is $k$-primitive.

It now follows from Lemma~\ref{r.f.lemma} that it is enough to show that $a_k x_{k^{-1}\mb{u}}$ is in the right hand side of~\eqref{eqinc} for 
$\mb{u}$ as in~\eqref{reduced form}. For such a choice of $\bu$ we have seen in the course of the proof of Lemma~\ref{link to Hecke friends} that the matrix 
$\ga$ solving equation~\eqref{basic eq} may be chosen to be as in~\eqref{big matrix2}. Thus~\eqref{basic eq} now becomes
\begin{equation}\label{eq113}
a_k x_{k^{-1}\mb{u}}=\smallmat{ k^{-m/n}A_\ga &k^{-m/n}B_\ga\\0&I}x_0=\smallmat{k^{-m/n}A_\ga&0\\0&I}\smallmat{I&A_\ga^{-1}B_\ga\\0&I}x_0,
\end{equation} 
and indeed the concrete
form of $\ga$~\eqref{big matrix2} gives by a straightforward calculation that 
\begin{equation}\label{eq114}
kA_\ga^{-1}B_\ga=\smallmat{-\diag{f_1\dots f_n}\\0_{m-n\times m-n}}, 
\end{equation}
so indeed $(kA_\ga^{-1}B_\ga)^t$ is $k$-primitive.
\end{proof}
In particular, we record the following corollary for future reference.
\begin{corollary}\label{r.n=m}
For any $k>k_0$ and any $x\in X_m$, $\on{supp}\pa{a(k)_*\bm{\mu}_k}\cap \on{Tor}_{k_0}(\pi_3^{-1}(x))=\varnothing$.

\end{corollary}
%%%
%%%
%%%
We now turn to the proof of Step 3.
Let $\sig$ be a weak$^*$ accumulation point of $\set{a(k)_*\bm{\mu}_k}_{k=1}^\infty$. By Lemmas~\ref{basic lemma}, \ref{invariance under Lambda},
$\sig$ is a $\Lam$-invariant measure supported in $Hx_{0}$.
By Step 2 we conclude that $\sig$ is a probability measure that projects under $\pi_3$ to either $\mb{m}_{X_m}$ or $\del_{x_0}$ according to whether $m>n$, $m=n$ respectively.
By Corollary~\ref{summarizing}, there are only countably many ergodic $\Lam$-invariant probability measures supported in $Hx_{0}$ and so let $\set{\sig_i}_{i=0}^\infty$ be an enumeration of them. By the ergodic decomposition we may represent $\sig=\sum_{i=0}^\infty c_i\sig_i$, where $c_i\ge 0$ and $\sum_{i=0}^\infty c_i=1$. 
 %Let us agree that in the case $m>n$ we let $\sig_0=\mb{m}_{Hx_{0}}$, and in the case $m=n$ we let $\sig_0=\mb{m}_{Vx_0}$. With 
 %this notation the proof of Step 3 will be concluded once we show that $c_0=1$. 

 \underline{Assume $m=n$}. 
 Let us denote  $\sig_0=\mb{m}_{Vx_0}$. With 
 this notation we  will conclude this case once we show that $c_i=0$ for $i>0$. 
Fixing $i>0$ and using the terminology of Definition~\ref{t.o.m}, $\sig_i$ is of type~\eqref{c.g.1}--\eqref{c.g.4}. According to the type we will show that
$c_i=0$.
 
 It is impossible that $\sig_i$ is of type~\eqref{c.g.2} because since by Step 2  $(\pi_3)_*\sig=\del_{x_0}$, it would follow
 from Corollary~\ref{summarizing}\eqref{sum2} that $\sig_i=\mb{m}_{Vx_0}=\sig_0$.
Similarly, Corollary~\ref{summarizing} implies that if $\sig_i$ is of type~\eqref{c.g.3} or~\eqref{c.g.4} then $c_i=0$. 
 Finally,
 if $i$ is such that $\sig_i$ is of type~\eqref{c.g.1} then by Corollary~\ref{r.n=m} we have that the finite $\Lam$-orbit on which $\sig_i$ is supported on
 is disjoint, for large values of $k$, from the support of $a(k)_*\bm{\mu}_k$. By the non-accumulation Theorem~\ref{n.a.t} we deduce that $c_i=0$. 
 Note that, similarly to the discussion at the end of \S\ref{step1section}, the application of Theorem \ref{n.a.t} was done with the following choices:
 $G=\Lam\ltimes V, \Ga=\Lam\ltimes\on{Mat}_{m\times n}(\bZ)$, $L=\Lam$, $z$ being a point in the support of $\sig_i$ and 
 $W=\lie(V)=\on{Mat}_{m\times n}(\bR)$ which indeed has no nonzero $\Lam$-fixed vectors.
This 
 concludes the proof of Step 3 for the case $n=m$.
 
 \underline{Assume $m>n$}.  
 Let us denote  $\sig_0=\mb{m}_{Hx_{0}}$ -- the only measure of type~\eqref{c.g.4}. With 
 this notation we will conclude this case once we show that $c_i=0$ for all $i>0$. We do this by considering the possible types.
By Step 2 $(\pi_3)_*\sig=\mb{m}_{X_m}$ so we conclude from Corollary~\ref{summarizing}  that for any $i$ such that  $\sig_i$ is of type~\eqref{c.g.1} or~\eqref{c.g.2} we have that $c_i=0$. Let $i>0$ be such that $\sig_i$ is of type~\eqref{c.g.3}. 
Note that by Corollary~\ref{summarizing}, the intersection of the support of a measure of type~\eqref{c.g.3} with any $\pi_3$-fiber $\pi_3^{-1}(x)$ is contained in $\on{Tor}_{k_0}(\pi_3^{-1}(x))$ for a fixed $k_0$
that does not depend on the choice of $x\in X_m$.
 By Corollary~\ref{r.n=m} we conclude that for $k>k_0$
 the support of $a(k)_*\bm{\mu}_k$ is disjoint from the periodic $\Lam\cdot \SL_m(\bR)$-orbit on which $\sig_i$ is supported. 
 It now follows from the Non-accumulation Theorem~\ref{n.a.t} that $c_i=0$. 
 The application of Theorem~\ref{n.a.t} is done with the following choices: $G=\Lam\cdot H$, $\Gamma=\Lam\ltimes\on{Mat}_{m\times n}(\bZ)$, $L=\Lam \cdot\SL_m(\bR)$,
 $z$ any point in the support of $\sig_i$, and $W=\lie(V)=\on{Mat}_{m\times n}(\bR)$.
 This concludes the proof of Step 3.
 \subsection{Step 4}
In this step we conclude the proof of Theorem~\ref{JED}; that is, we establish the convergence 
$\wt{a}(k)_*\wt{\bm{\mu}}_k\wstar \mb{m}_{Ux_0}\times \bm{\theta}$. 
 At some point close to the end of the proof we will need the following lemma\footnote{This lemma is needed only in the case $m=n=2$.}.
\begin{lemma}\label{r.n=m0}
In the case\footnote{Compare this with~\eqref{set3}.} $m=n$, 
$$\wt{a}_k\wt{\cP}_k=\set{\pa{\smallmat{I&0\\ k^{-1}\mb{u}&I}x_0, \smallmat{I&k^{-1}\bu^*\\0&I}x_0}:\mb{u}\textrm{ is $k$-primitive}},$$
where $\bu^*\in\Mat_n(\bZ)$ is an inverse of $\bu$ modulo $k$. 
\end{lemma}
\begin{proof}
The proof is a further inspection of the arguments in Lemmas~\ref{link to Hecke friends}, \ref{lemtorprim} so we keep it terse. 
We need to show that $a_kx_{k^{-1}\mb{u}}=\smallmat{I& k^{-1}\mb{u}^*\\0&I}x_0$ and by acting with $\Lam$ we see that it is enough to show this for
$\mb{u}$ as in~\eqref{reduced form}. For such $\mb{u}$ this follows from~\eqref{eq113}, \eqref{eq114}, taking into account the concrete form
of $\ga$ given by~\eqref{big matrix2}.
\end{proof}

We now turn to the proof of Step 4. Let $\wt{\sig}$ be a 
weak$^*$ accumulation point of the sequence $\set{\wt{a}(k)_*\wt{\bm{\mu}}_k}$. 
By Steps 1 and 3 we know that $(\pi_1)_*\wt{a}(k)_*\wt{\bm{\mu}}_k\wstar \mb{m}_{Ux_0}$, 
$(\pi_2)_*\wt{a}(k)_*\wt{\bm{\mu}}_k\wstar \bm{\theta}$. As $\pi_2$ is proper we deduce from the fact 
that $\bm{\theta}$ is a probability measure, that $\wt{\sig}$ is a probability measure. This in turn implies that 
$(\pi_1)_*\wt{\sig}=\mb{m}_{Ux_0}$, $(\pi_2)_*\wt{\sig}=\bm{\theta}$.
Also, by Lemma~\ref{invariance under Lambda}\eqref{invlem5}, $\wt{\sig}$ is $\Lam_\Del$-invariant. 
Hence, the dynamical system $(Ux_0\times Hx_{0},\wt{\sig},\Lam_\Del)$ is a joining of $(Ux_0,\mb{m}_{Ux_0},\Lam)$, $(Hx_{0},\bm{\theta},\Lam)$. 
By Theorems~\ref{mcn<m}, \ref{mcn=m}, we deduce that if $(m,n)\ne(2,2)$ then $\wt{\sig}=\mb{m}_{Ux_0}\times \bm{\theta}$ and the proof is concluded. 
We are thus left to deal with the case $n=m=2$. 
%%%%%%%%%%%%%%%%%%%%%%%%%%%
\subsubsection{The case $n=m=2$}\label{two two case}
As the projections $\mb{m}_{Ux_0},\mb{m}_{Vx_0}$ are $\Lam$-ergodic we conclude that (almost) any ergodic component of $\wt{\sig}$ is a joining 
of $\mb{m}_{Ux_0},\mb{m}_{Vx_0}$ as well. By Theorem~\ref{mcn=m} there are only countably many such ergodic components and we will be done once we 
prove the following lemma in which we use the notation of~\eqref{Wpq}.
\begin{lemma}
Let $p,q$ be  co-prime integers and let $(x,y)\in Ux_0\times Vx_0$. Then, $\wt{\sig}(L_{p,q}(x,y))=0$.
\end{lemma}
\begin{proof} 
By Corollary~\ref{r.negligible} we deduce that it is enough to prove that 
\begin{equation}\label{neg pf}
\frac{|\wt{a}_k\wt{\cP}_k\cap L_{p,q}(x,y)|}{|\wt{\cP}_k|}\lra 0\textrm{ as }k\to\infty.
\end{equation}
Here the application of Corollary~\ref{r.negligible} is done with the following choices: $G=\Lam_\Del\ltimes (U\times V)$, 
$\Ga=\Lam_\Del\ltimes (U(\bZ)\times V(\bZ))$.
$L=L_{p,q}$ and $z=(x,y)$. The existence of the $\Lam_\Del$-invariant linear complement of $\lie(L_{p,q})$ follows from the semi-simplicity of 
the group $\SL_2(\bR)\times\SL_2(\bR)$ and the fact that $\lie(L_{p,q})$ is an invariant subspace of $\on{Mat}_2(\bR)\times\on{Mat}_2(\bR)$
under the representation~\eqref{r.f}. 

Let us denote for $\mb{v}\in\on{Mat}_2(\bR)$, $y_{\mb{v}}\defi\smallmat{I&\mb{v}\\0&I}x_0\in Vx_0$,
and for $\mb{u}\in \GL_2(\bZ/k\bZ)$ we write (using the notation of Lemma~\ref{r.n=m0}),
\begin{equation}\label{Phi}
\Phi(\bu)\defi(x_{k^{-1}\bu},y_{k^{-1}\bu^*}).
\end{equation} 
Lemma~\ref{r.n=m0} gives that $\Phi$ is a bijection between $\GL_2(\bZ/k\bZ)$ and
\begin{equation}\label{eq corresp}
\wt{a}_k\wt{\cP}_k=\set{\Phi(\bu):\bu\in\GL_2(\bZ/k\bZ)}.
\end{equation} 
The left and right actions of $\GL_2(\bZ/k\bZ)$ on itself induce (via $\Phi$) actions of it on $\wt{\cP}_k$. 
Fixing $\bu=\smallmat{a&b\\c&d}\in\GL_2(\bZ/k\bZ)$ we will establish~\eqref{neg pf} by
analyzing the orbit $\Phi\pa{\set{g_\ell\bu:\ell\in(\bZ/k\bZ)^\times}}$, 
where $g_\ell\defi\smallmat{\ell&0\\0&1}$, which is of size $\phi(k)$ (here $\phi$ is the Euler function), and proving that 
\begin{equation}\label{new goal}
\frac{\av{\set{\ell :\Phi(g_\ell\bu)\textrm{ is in the same $L_{p,q}$-orbit of }\Phi(\bu)}}}{\phi(k)}\overset{k\to\infty}{\lra}0.
\end{equation}
Following the definition we see that $\Phi(\bu),\Phi(g_\ell\bu)$ are in the same $L_{p,q}$-orbit if and only if  the difference
$(g_\ell\bu,\bu^*g_\ell^*)-(\bu,\bu^*)$ (thought of as an element of $\on{Mat}_2(\bR)\times \on{Mat}_2(\bR)$)
is in $\lie(L_{p,q}) + \on{Mat}_2(k\bZ)\times\on{Mat}_2(k\bZ)$. 
In other words, if we let $j=\det \bu$ and recall that $\bu^*=j^*\smallmat{d&-b\\-c&a}$, $g_\ell^*=\ell^*\smallmat{1&0\\0&\ell}$, 
and the definition of $L_{p,q}$ in~\eqref{Wpq}, we see that $\Phi(\bu),\Phi(g_\ell\bu)$  are in the same $L_{p,q}$-orbit if and only if
\begin{equation}\label{conc eq}
p \smallmat{0&(1-\ell)b\\0&(\ell-1)a}=q\smallmat{(\ell^*-1)j^*d&0\\(1-\ell^*)j^*c&0}\;\on{mod} k\bZ.
\end{equation}
We claim that \eqref{conc eq} implies $\ell =1 \on{mod} k\bZ$. For that purpose we split $k=k_1k_2$ with $k_1$ coprime to $p$ and $k_2$ coprime to $q$. Using the second column in \eqref{conc eq} we get $(\ell-1)b\in k_1\bZ$ and $(\ell-1)a\in k_1\bZ$. However, since $a,b$ form the first row of an invertible matrix modulo $k$ we see that the greatest common divisor of $a,b,k_1$ is one and we conclude $\ell=1\on{mod}k_1\bZ$. Using the first column of \eqref{conc eq} and invertibility of $j^*$ modulo $k_2$ in the same way we get $\ell^*=1\on{mod}k_2\bZ$. Together this shows $\ell=1\on{mod}k\bZ$ and so \eqref{new goal} (since $\phi(k)\to\infty$). 
This concludes the proof the lemma and by that the proof of Theorem~\ref{JED}.
\end{proof}

\section{A strengthening of Theorem~\ref{JED}}\label{c.r}
Before we turn to the closing section of this paper we observe that the proof presented above of Theorem~\ref{JED} actually establishes more than stated. 
So far we looked at (various images of) the counting measure on $\wt{\cP}_k$ and used its $\Lam_\Del$-invariance but $\wt{\cP}_k$ splits into $\Lam_\Del$-orbits
and we could restrict our attention to single orbits. 
Let 
\begin{equation}\label{Ik}
I_k\defi\set{\vec{\ell}=(\ell_1,\dots,\ell_n)\in\bZ^n:\ell_i|\ell_{i+1}, \gcd(\ell_i,k)=1},
\end{equation}
and denote by $\br{I_k}$ the image of $I_k$  in $(\bZ/k\bZ)^n$. We refer to the elements of $\br{I_k}$ as \textit{$k$-primitive elementary divisors tuples} and sometimes abuse notation and do not distinguish between elements in 
$\br{I_k}$ and their integer representatives.
It follows from Lemma~\ref{r.f.lemma} that a $\Lam_\Del$-orbit in $\wt{\cP}_k$  is a set of the form
\begin{multline*}
\cP_{k,\vec{\ell}}\defi \{(x_{k^{-1}\mb{u}},x_{k^{-1}\mb{u}}):{\tiny \textrm{the elementary divisors tuple of $\mb{u}$ equals $\vec{\ell}\on{mod} k$}}\}.
\end{multline*} 
Let $\wt{\bm{\mu}}_{k,\vec{\ell}}$ be the normalized counting measure on $\wt{\cP}_{k,\vec{\ell}}$ so that $\wt{\bm{\mu}}_k$ is an average of the $\wt{\bm{\mu}}_{k,\vec{\ell}}$'s.
\begin{theorem}\label{strong JED}
If  $(m,n)\ne (2,2), (1,1)$. Then, for any choice of a sequence of $k$-primitive elementary divisor tuples $\vec{\ell}_k\in \br{I_k}$
(as $k\to\infty$) we have that   
$\wt{a}(k)_*\wt{\bm{\mu}}_{k,\vec{\ell}_k}\wstar\mb{m}_{Ux_0}\times \bm{\theta}$.
\end{theorem}
\begin{proof}
We use the $\Lam_\Del$-invariance of $\wt{\bm{\mu}}_{k,\vec{\ell}_k}$ and follow Steps 1--4 as presented in~\S\ref{m disc}. The proof of Theorem~\ref{JED} presented above carries over verbatim replacing
$\wt{\bm{\mu}}_k$ with $\wt{\bm{\mu}}_{k,\vec{\ell}_k}$. 
\end{proof}
It is interesting to note that the statement of
Theorem~\ref{strong JED} is false in the cases $(m,n)=(1,1), (2,2)$. Indeed, for $(n,m)=(1,1)$ the measures
$\wt{\bm{\mu}}_{k,\vec{\ell}_k}$ are dirac masses. For $(n,m)=(2,2)$ we note that the argument presented above establishing
Theorem~\ref{JED} in this case used in~\S\ref{two two case}, in addition to the $\Lam_\Del$-invariance, also the 
invariance of $\wt{\bm{\mu}}_k$ under a $\GL_2(\bZ/k\bZ)$-action which
is no longer present when considering the $\wt{\bm{\mu}}_{k,\vec{\ell}}$'s.
In fact, if $n=m=2$ and we choose for any $k$ the $k$-primitive elementary divisors tuple to be $\vec{\ell}=(1,1)$
 then the set $\on{supp}(\wt{a}(k)_*\wt{\bm{\mu}}_{k,\vec{\ell}})=\wt{a}(k)\wt{\cP}_{k,\vec{\ell}}$, 
 which is the $\Lam_\Del$-orbit of $\Phi\pa{\smallmat{1&0\\0&1}}$ ($\Phi$ as in~\eqref{Phi}),
is contained in the fixed orbit $L_{1,1}(x_0,x_0)\subset Ux_0\times Vx_0$ (notation being as in~\eqref{Wpq}). In particular,  $\wt{a}(k)_*\wt{\bm{\mu}}_{k,\vec{\ell}}$ 
cannot be expected to converge to $\mb{m}_{Ux_0}\times\mb{m}_{Vx_0}$. This shows that the statement
of Theorem~\ref{strong JED} is simply false in this case.

\section{Proof of a strengthening of Theorem~\ref{generalized Schmidt}}\label{sec appf}

Below we state and prove a generalization of Theorem~\ref{generalized Schmidt}. Recall the notation of~\S\ref{ssecapp} and in particular,
that $n=1$ and so $d=m+1$. Theorem~\ref{strong JED} obtains a rather concrete meaning:
The set $\br{I_k}$ from~\eqref{Ik} is simply $(\bZ/k\bZ)^\times$ and the elementary divisor tuple of 
a vector $\mb{u}$ is simply $\gcd(\mb{u})$ so that for $c_k\in(\bZ/k\bZ)^\times$, the measure
$\wt{\bm{\mu}}_{k,c_k}$ is simply the normalized counting measure on the diagonal embedding of
the collection $\set{x_{k^{-1}\mb{u}}: \gcd(\mb{u})=c_k}$ in $Ux_0\times Ux_0$. 
Theorem~\ref{strong JED} states that 
$\wt{a}(k)_*\wt{\bm{\mu}}_{k,c_k}\wstar \bm{m}_{Ux_0}\times \mb{m}_{Hx_0}$ for any choice of $c_k< k$ coprime to~$k$. 
In particular, it follows that 
if $\bm{\mu}_{k,c_k}\defi (\pi_2)_*\wt{\bm{\mu}}_{k,c_k}$, then $a(k)_*\bm{\mu}_{k,c_k}\to \mb{m}_{Hx_0}$. 
We shall need the following corollary
of Theorem~\ref{strong JED}.

\begin{corollary}\label{corforSchmidt}
Let $F\subset Ux_0$ be a subset satisfying $\on{(i)}$ $\mb{m}_{Ux_0}(F)>0$, $\on{(ii)}$ $\mb{m}_{Ux_0}(\partial F)=0$ 
(where  $\partial F$ denote the boundary of $F$ in $Ux_0$), and let $f\defi\frac{1}{\mb{m}_{Ux_0}(F)}\chi_F$. 
For any sequence  $c_k$ with $1\le c_k\le k$ and $\gcd(k,c_k)=1$, 
we have $a(k)_*fd\bm{\mu}_{k,c_k}\wstar\bm{m}_{Hx_0}$.
\end{corollary}
\begin{proof}
Given a test function $\vphi\in C_c(X_d)$ we consider the function $(f\times \vphi)$ on $Ux_0\times X_d$ given by $(f\times\vphi)(x,y)=f(x)\vphi(y)$.
Although $(f\times \vphi)$ is not continuous, assumption (ii) implies that 
the points of discontinuity are of $\mb{m}_{Ux_0}\times \mb{m}_{Hx_{0}}$-measure zero. 
Using this the weak$^*$
convergence $\wt{a}(k)_*\wt{\bm{\mu}}_{k,c_k}\to \mb{m}_{Ux_0}\times \mb{m}_{Hx_{0}}$ from Theorem~\ref{strong JED}
 implies the convergence
\begin{align*}%\label{inteq1}
\smallint_{X_d\times X_d}(f\times\vphi)d\wt{a}(k)_*\wt{\bm{\mu}}_{k,c_k}&\to\smallint_{X_d\times X_d}(f\times\vphi)d\pa{\mb{m}_{Ux_0}\times 
\mb{m}_{Hx_{0}}}\\
\nonumber &=\smallint_{X_d} \vphi d\mb{m}_{Hx_{0}}.
\end{align*}
%Using the definitions we compute
%\begin{align}\label{inteq2}
%\smallint_{X_d}\vphi d\pa{a(k)_*fd\bm{\mu}_{k,c_k}}&=\smallint_{X_d}\vphi(a(k)x)f(x)d\bm{\mu}_{k,c_k}(x)\\
%\nonumber&=\smallint_{X_d\times X_d} (f\times\vphi)d\wt{a}(k)_*\wt{\bm{\mu}}_{k,c_k},
%\end{align}
We conclude 
that indeed $a(k)_*fd\bm{\mu}_{k,c_k}\wstar\mb{m}_{Hx_{0}}$ as desired.
\end{proof}
In the course of proving Theorem~\ref{generalized Schmidt} it is clearly harmless to assume  that the set $\cF$ is contained in the face
\[
\set{(u_1,\dots,u_m,1)\in\bR^d: \av{u_i}<1}\subset\partial B_\infty.
\] 
Recall that $\eta_k$ is the normalized counting measure on the set
\[
 \set{\br{\Lam_v}:v\in k\cF, v=(\mb{u}, k), \gcd(\mb{u},k)=1}.
\]
We split this set (and $\eta_k$) according to the $\Lam$-orbit of the various $\mb{u}$'s: We denote for $1\le c\le k$ with $\gcd(c,k)=1$ by $\eta_{k,c}$ the normalized counting 
measure on 
$\set{\br{\Lam_v}:v\in k\cF,\ v=(\mb{u}, k),\ 
\gcd(\mb{u})=c
},$
then $\eta_k$ is a convex combination of the $\eta_{k,c}$'s.
 We conclude that Theorem~\ref{generalized Schmidt} is implied by the following.
% Theorem
%in which we use the following ad-hoc notation: Given co-prime integers $c,k$ we denote 
%$\wh{\bZ}^d_{k,c}\defi\set{(\mb{u},k)\in\bZ^d:\gcd(\mb{u})=c}.$
\begin{theorem}\label{strongapp}
For any positive integer $k$ choose $1\le c_k\le k$ with $\gcd(k,c_k)=1$. Let $\cF\subset\set{(u_1,\dots,u_m,1)\in\bR^d: \av{u_i}<1}$  be a measurable set of positive measure with boundary in $\partial B_\infty$ of  measure 0 with respect to the $m$-dimensional Lebesgue  measure on $\partial B_\infty$. 
%Let $\eta_{k,c_k}\defi\frac{1}{\av {\wh{\bZ}^d_{k,c_k}\cap k \cF}}\sum_{v\in \wh{\bZ}_{k,c_k}^d\cap k \cF }\del_{\br{\Lam_v}}.$ 
Then, $\eta_{k,c_k}\wstar\mb{m}_{Z_m}$ as $k\to\infty$. 
\end{theorem}

\begin{proof}
Given co-prime integers $c,k$ we will use the abbreviation
$\wh{\bZ}^d_{k,c}\defi\set{(\mb{u},k)\in\bZ^d:\gcd(\mb{u})=c}.$
We need to show that for any $f\in C_c(Z_m)$, we have that 
\begin{equation}\label{nts0}
\frac{1}{\av{\wh{\bZ}^d_{k,c_k}\cap k\cF}}\sum_{v\in \wh{\bZ}^d_{k,c_k}\cap k\cF} f(\br{\Lam_v})\to\int_{Z_m} f d\mb{m}_{Z_m}\textrm{ as }k\to\infty.
\end{equation} 
Here and below we use the notation $\br{\Lam}$ to denote the projection of a lattice $\Lam\in X_m$ to $Z_m$.
Fix a function $f\in C_c(Z_m)$, $\eps>0$, and a subset~$F\subset\cF$ as in Corollary~\ref{corforSchmidt}. 
Below we think of $X_m$ (resp.\ $Z_m$) as the space of lattices (resp.\ $\SO_m(\bR)$-orbits of such) up to homothety in the hyperplane $\bR^m$ spanned by the first $m$ coordinate
axis in $\bR^d$.

We identify $\set{(u_1,\dots u_m,1)\in\bR^d: 0\le u_i< 1}$ with $Ux_0\subset X_d$
in the obvious manner; that is $(\bu,1)$ corresponds to $x_{\bu}$.  
This way we may think of $F$ as a subset of $X_d$ or $\bR^d$ at our convenience.
It follows from the discussion in Remark~\ref{rem0}\eqref{r.4.4 i.3} that if we 
denote by $p:\bR^d\to\bR^m$ the projection onto the first $m$ coordinates, then
the measure $(\pi_3)_*a(k)_*\pa{\chi_Fd\bm{\mu}_{k,c_k}}$ on $X_m$ (when normalized to be a probability measure), is the 
normalized counting measure supported on the collection
$$\set{p(\Lam_v)\in X_m:v\in\wh{\bZ}^d_{k,c_k}\cap kF}$$
(here we think of $F$ as a subset of $\partial B_\infty$ and of $kF$ as a subset of $\bR^d$).
Since we assume $F$ has positive measure, Corollary~\ref{corforSchmidt} tells us that these measures converge to $\mb{m}_{X_m}$  as $k\to\infty$,
i.e.
%In particular, after taking the quotient by $\SO_m(\bR)$ we see that  
\begin{equation}\label{nts0.5}
\frac{1}{\av{\wh{\bZ}^d_{k,c_k}\cap kF}}\sum_{v\in\wh{\bZ}^d_{k,c_k}\cap kF}\del_{{p(\Lam_v)}}\wstar \mb{m}_{X_m} \textrm{ as }k\to\infty.
\end{equation}
The convergence in \eqref{nts0.5} is not too far from implying~\eqref{nts0}; we only need to deal with the distortion that the orthogonal projection $p$ brings into the picture. We will see that this distortion is controllable by splitting $\cF$ into finitely
many sets $F\subset\cF$ whose diameters are sufficiently small, say smaller than $\del>0$.

Choose a finite partition $\cF=\sqcup_{i=1}^\ell F_i$, where the sets $F_i$ are measurable, have boundary of Lebesgue measure zero 
(relative to $\partial B_\infty$),
and have diameter $\le \del$. 
In order to conclude the proof we will  use \eqref{nts0.5} to show that once $\del$ is chosen small enough, for any large enough 
$k$ and for any $1\le i\le \ell$, we have
\begin{equation}\label{nts1}
\bigav{\int_{Z_m} f d\mb{m}_{Z_m}-\frac{1}{\av{\wh{\bZ}^d_{k,c_k}\cap k F_i}}\sum_{v\in\wh{\bZ}^d_{k,c_k}\cap kF_i} f(\br{\Lam_v})}\le \eps.
\end{equation}
Since $\eps$ is arbitrary,   this implies \eqref{nts0}.

To this end, fix $1\le i\le \ell$ and denote $F=F_i$.
Choose a reference point $v_0\in F$ 
and let $S$ be the inverse of the restriction of the projection $p$ to the $m$-dimensional 
linear space $\set{v_0}^\perp$. 
Let $k_{v_0}\in\SO_d(\bR)$ be chosen as described in~\S\ref{ssecapp} (i.e.\ such that $k_{v_0}\set{v_0}^\perp=\bR^m$), 
and consider 
the function $\vphi\in C_c(X_m)$ defined by $\vphi(\Lam)\defi f(\br{k_{v_0}S(\Lam)})$.
Note that this definition depends on the choice of $v_0$  but is independent of 
the choice of $k_{v_0}$, and also 
%Here we think of a point $\Lam\in X_m$ as a lattice in 
%the copy of $\bR^m$ given by $p(\bR^d)$ and so $S(\Lam)$ is an $m$-dimensional discrete subgroup of $W_0$ and so 
%$\br{S(\Lam)}\in Z_m$ is well defined.  
that $\int_{X_m} \vphi d\mb{m}_{X_m}=\int_{Z_m} f d\mb{m}_{Z_m}$.
Applying~\eqref{nts0.5} to the function $\vphi$ we conclude that 
 \begin{equation}\label{nts2}
 \lim_k\bigav{\frac{1}{\av{\wh{\bZ}^d_{k,c_k}\cap kF}}\sum_{v\in\wh{\bZ}^d_{k,c_k}\cap kF}f(\br{k_{v_0}S(p(\Lam_v))} -\int_{Z_m} f d \mb{m}_{Z_m}}
 =0.
 \end{equation}
 Let $\on{Cone}(F)\defi \set{tF:t>0}$.
 Let $\on{d}_{Z_m}(\cdot,\cdot)$ denote a distance function on $Z_m$
and assume for the moment that it satisfies 
 \begin{equation}\label{nts11}
 \sup_{v\in \bZ^d\cap\on{Cone}(F)} \on{d}_{Z_m}\bigl(\br{\Lam_v},\br{k_{v_0}S(p(\Lam_v))}\bigr)\to 0\textrm{ as }\on{diam}(F)\to0.
\end{equation}
Using~\eqref{nts11} and the uniform continuity of $f$ we see that once  $\del>0$ is chosen
small enough, $\av{f(\br{k_{v_0}S(p(\Lam_v))})-f(\br{\Lam_v})}\le \eps$. 
Plugging this into~\eqref{nts2} gives~\eqref{nts1} as desired.

We are left to explain the validity of~\eqref{nts11}. 
%It will be more convenient to identify $X_m$ (resp.\ $Z_m$) with the space of homothety classes of 
%(resp.\ $\SO_m(\bR)$-orbits of) lattices in $\bR^m$; that is, $X_m=C(\bR)\backslash \GL_m(\bR)/\GL_m(\bZ)$ (resp.\ 
%$Z_m= C(\bR)\SO_m(\bR)\backslash \GL_m(\bR)/\GL_m(\bZ)$), where $C$ is the center of $\GL_m$.  
We let $\on{d}_{\SL_m(\bR)}(\cdot,\cdot)$ denote a right $\SL_m(\bR)$-invariant metric on $\SL_m(\bR)$.
With it we induce metrics 
$\on{d}_{X_m}, \on{d}_{Z_m}$ on $X_m,Z_m$ respectively by the formulas 
$$\on{d}_{X_m}(g\SL_m(\bZ),h\SL_m(\bZ))=\inf_{\ga\in\SL_m(\bZ)}\on{d}_{\SL_m(\bR)}(g\ga,h),$$
$$\on{d}_{Z_m}(\br{x},\br{y})=\inf_{k_1,k_2\in\SO_m(\bR)}\on{d}_{X_m}(k_1x,k_2y)$$
for $g,h\in\SL_m(\bZ)$ and $x,y\in X_m$.
The validity of~\eqref{nts11} 
now follows as 
\begin{align*}
\on{d}_{Z_m}(\br{\Lam_v},\br{k_{v_0}S(p(\Lam_v))})&\le \on{d}_{X_m}(k_v\Lam_v,k_{v_0}S p k_v^{-1}k_v\Lam_v)\\
&\le \on{d}_{\SL_d(\bR)}(e, k_{v_0}S p k_v^{-1})
\end{align*}
and as $\on{diam}(F)\to 0$, if the matrix $k_v$ is chosen correctly,
\[
 k_{v_0}S p k_v^{-1}|_{\bR^m}:\bR^m\to\bR^m
\]
approaches the identity.
\end{proof}

\begin{acknowledgments}
We would like to thank Han Li and Jens Marklof for valuable discussions.  We further thank the Erwin Schr\"odinger Institute in Vienna and the Forschungsinstitut Mathematik at the ETH Zurich for their hospitality. Finally we thank the referee for his criticism that improved the presentation significantly.
\end{acknowledgments}

%\bibliographystyle{alpha}
%\bibliography{elonbib}
\def\cprime{$'$} \def\cprime{$'$} \def\cprime{$'$}
% \bib, bibdiv, biblist are defined by the amsrefs package.
\begin{bibdiv}
\begin{biblist}

\bib{AES1}{unpublished}{
      author={Aka, Menny},
      author={Einsiedler, Manfred},
      author={Shapira, Uri},
       title={Integer points on spheres and their orthogonal grids},
         url={http://arxiv.org/pdf/1411.1272.pdf},
        note={Preprint. http://arxiv.org/pdf/1411.1272.pdf},
}

\bib{AES2}{unpublished}{
      author={Aka, Menny},
      author={Einsiedler, Manfred},
      author={Shapira, Uri},
       title={Integer points on spheres and their orthogonal lattices (with an
  appendix by {Ruixiang Zhang})},
         url={http://arxiv.org/pdf/1502.04209.pdf},
        note={Preprint. http://arxiv.org/pdf/1502.04209.pdf},
}

\bib{ClozelOhUlmo}{article}{
      author={Clozel, Laurent},
      author={Oh, Hee},
      author={Ullmo, Emmanuel},
       title={Hecke operators and equidistribution of {H}ecke points},
        date={2001},
        ISSN={0020-9910},
     journal={Invent. Math.},
      volume={144},
      number={2},
       pages={327\ndash 351},
         url={http://dx.doi.org/10.1007/s002220100126},
      review={\MR{1827734 (2002m:11044)}},
}

\bib{ManfredSemisimpleRatner}{article}{
      author={Einsiedler, Manfred},
       title={Ratner's theorem on {${\rm SL}(2,\Bbb R)$}-invariant measures},
        date={2006},
        ISSN={0012-0456},
     journal={Jahresber. Deutsch. Math.-Verein.},
      volume={108},
      number={3},
       pages={143\ndash 164},
      review={\MR{2265534 (2008b:37048)}},
}

\bib{EskinOh-Hecke}{article}{
      author={Eskin, Alex},
      author={Oh, Hee},
       title={Ergodic theoretic proof of equidistribution of {H}ecke points},
        date={2006},
        ISSN={0143-3857},
     journal={Ergodic Theory Dynam. Systems},
      volume={26},
      number={1},
       pages={163\ndash 167},
         url={http://dx.doi.org/10.1017/S0143385705000428},
      review={\MR{2201942 (2006j:11068)}},
}

\bib{GlasnerBook}{book}{
      author={Glasner, Eli},
       title={Ergodic theory via joinings},
      series={Mathematical Surveys and Monographs},
   publisher={American Mathematical Society},
     address={Providence, RI},
        date={2003},
      volume={101},
        ISBN={0-8218-3372-3},
      review={\MR{1958753 (2004c:37011)}},
}

\bib{IwaniecSMAF}{book}{
      author={Iwaniec, Henryk},
       title={Spectral methods of automorphic forms},
     edition={Second},
      series={Graduate Studies in Mathematics},
   publisher={American Mathematical Society},
     address={Providence, RI},
        date={2002},
      volume={53},
        ISBN={0-8218-3160-7},
      review={\MR{1942691 (2003k:11085)}},
}

\bib{KSS}{incollection}{
      author={Kleinbock, Dmitry},
      author={Shah, Nimish},
      author={Starkov, Alexander},
       title={Dynamics of subgroup actions on homogeneous spaces of {L}ie
  groups and applications to number theory},
        date={2002},
   booktitle={Handbook of dynamical systems, {V}ol.\ 1{A}},
   publisher={North-Holland, Amsterdam},
       pages={813\ndash 930},
         url={http://dx.doi.org/10.1016/S1874-575X(02)80013-3},
      review={\MR{1928528 (2004b:22021)}},
}

\bib{MargulisThesis}{book}{
      author={Margulis, Grigoriy~A.},
       title={On some aspects of the theory of {A}nosov systems},
      series={Springer Monographs in Mathematics},
   publisher={Springer-Verlag},
     address={Berlin},
        date={2004},
        ISBN={3-540-40121-0},
        note={With a survey by Richard Sharp: Periodic orbits of hyperbolic
  flows, Translated from the Russian by Valentina Vladimirovna Szulikowska},
      review={\MR{2035655 (2004m:37049)}},
}

\bib{Marklof-Inventiones}{article}{
      author={Marklof, Jens},
       title={The asymptotic distribution of {F}robenius numbers},
        date={2010},
        ISSN={0020-9910},
     journal={Invent. Math.},
      volume={181},
      number={1},
       pages={179\ndash 207},
         url={http://dx.doi.org/10.1007/s00222-010-0245-z},
      review={\MR{2651383 (2011e:11133)}},
}

\bib{MarklofStrombergsson-CG}{article}{
      author={Marklof, Jens},
      author={Str{\"o}mbergsson, Andreas},
       title={Diameters of random circulant graphs},
        date={2013},
        ISSN={0209-9683},
     journal={Combinatorica},
      volume={33},
      number={4},
       pages={429\ndash 466},
         url={http://dx.doi.org/10.1007/s00493-013-2820-6},
      review={\MR{3133777}},
}

\bib{Mozes-Shah}{article}{
      author={Mozes, Shahar},
      author={Shah, Nimish},
       title={{On the space of ergodic invariant measures of unipotent flows}},
        date={1995},
     journal={Ergodic Theory and Dynamical Systems},
      volume={15},
      number={1},
       pages={149\ndash 159},
}

\bib{R-annals}{article}{
      author={Ra{}tner, Marina},
       title={On {R}aghunathan's measure conjecture},
        date={1991},
        ISSN={0003-486X},
     journal={Ann. of Math. (2)},
      volume={134},
      number={3},
       pages={545\ndash 607},
         url={http://dx.doi.org/10.2307/2944357},
      review={\MR{MR1135878 (93a:22009)}},
}

\bib{R-Duke}{article}{
      author={Ra{}tner, Marina},
       title={Raghunathan's topological conjecture and distributions of
  unipotent flows},
        date={1991},
        ISSN={0012-7094},
     journal={Duke Math. J.},
      volume={63},
      number={1},
       pages={235\ndash 280},
         url={http://dx.doi.org/10.1215/S0012-7094-91-06311-8},
      review={\MR{MR1106945 (93f:22012)}},
}

\bib{Schmidt98}{article}{
      author={Schmidt, Wolfgang~M.},
       title={The distribution of sublattices of {${\bf Z}^m$}},
        date={1998},
        ISSN={0026-9255},
     journal={Monatsh. Math.},
      volume={125},
      number={1},
       pages={37\ndash 81},
         url={http://dx.doi.org/10.1007/BF01489457},
      review={\MR{1485976 (99c:11083)}},
}

\bib{SDbriefguide}{book}{
      author={Swinnerton-Dyer, H. P.~F.},
       title={A brief guide to algebraic number theory},
      series={London Mathematical Society Student Texts},
   publisher={Cambridge University Press, Cambridge},
        date={2001},
      volume={50},
        ISBN={0-521-00423-3},
         url={http://dx.doi.org/10.1017/CBO9781139173360},
      review={\MR{1826558 (2002a:11117)}},
}

\bib{ShahgeneralizedRatner}{incollection}{
      author={Shah, Nimish~A.},
       title={Invariant measures and orbit closures on homogeneous spaces for
  actions of subgroups generated by unipotent elements},
        date={1998},
   booktitle={Lie groups and ergodic theory ({M}umbai, 1996)},
      series={Tata Inst. Fund. Res. Stud. Math.},
      volume={14},
   publisher={Tata Inst. Fund. Res.},
     address={Bombay},
       pages={229\ndash 271},
      review={\MR{1699367 (2001a:22012)}},
}

\bib{WitteMQ}{article}{
      author={Witte, Dave},
       title={Measurable quotients of unipotent translations on homogeneous
  spaces},
        date={1994},
        ISSN={0002-9947},
     journal={Trans. Amer. Math. Soc.},
      volume={345},
      number={2},
       pages={577\ndash 594},
         url={http://dx.doi.org/10.2307/2154988},
      review={\MR{1181187 (95a:22005)}},
}

\end{biblist}
\end{bibdiv}

\end{document}